\documentclass[11pt,a4paper,twoside]{article}      
\usepackage{amsmath,amssymb,amsfonts} 
\usepackage{graphicx}                 
\usepackage{color}                    
\usepackage{hyperref}                 
\usepackage[english]{babel}
\usepackage{float,subfig}
\usepackage{textcomp}
\newenvironment{proof}{\paragraph{\it Proof.}}{\hfill$\square$}
\newcommand{\npar}{\par \vspace{2.3ex plus 0.3ex minus 0.3ex}}
\oddsidemargin 0cm
\evensidemargin 0cm

\pagestyle{myheadings}         
\textwidth 15.5cm
\topmargin -1cm
\parindent 0cm
\textheight 24cm
\parskip 1mm

\newtheorem{theorem}{Theorem}
\newtheorem{proposition}{Proposition}
\newtheorem{corollary}{Corollary}

\newtheorem{claim}{Claim}
\newtheorem{remark}{Remark}
\newtheorem{definition}{Definition}

\title{\bf Classification and study of a new class  of $ \xi^{(as)} $-QSO }

\author{\bf Hamza Abd El-Qader$^{1}$, \bf Ahmad Termimi Ab Ghani	$^{1}$, \bf I. Qaralleh$^{2}$\\
\small $^{1}\thanks{Corresponding author: hamza88q@gmail.com}$ \it School of Informatics and Applied Mathematics 
\\
\small \it Universiti Malaysia Terengganu\\
\small \it 21030 Kuala Nerus \\
\small\it Terengganu\\
\small $^{2}$ \it Department of Mathematics, Faculty of Science\\
\small \it Tafila Technical University\\
\small \it Tafila 66110, Jordan\\
}

\begin{document}
\maketitle

{\bf Abstract --}
{ Many systems are presented using theory of nonlinear operators. A quadratic stochastic operator (QSO) is perceived as a nonlinear  operator. It has a wide range of applications in various disciplines, such as mathematics, biology, and other sciences. The central problem that surrounds this nonlinear operator lies in the requirement that  behavior should be studied. Nonlinear operators, even QSO (i.e., the simplest nonlinear operator), have not been thoroughly investigated. This study aims  to present a new class of $\xi^{(as)}$-QSO defined on 2D simplex and to classify it into 18 non-conjugate (isomorphic) classes based on their conjugacy and the remuneration of  coordinates. In addition,  the limiting points of the behavior of trajectories for four  classes  defined on 2D simplex are examined.}
\npar 
\npar 
\section{Introduction}

The concept of  a quadratic stochastic operator (QSO) was developed by brainchild of S. Bernstein in 1924  \cite{1}. Since then, QSOs have been  studied  intensively as they emerge in various models in physics \cite{24,32}, biology \cite{1,15,14}, economics and different branches of mathematics, such as graph theory and probability theory \cite{15,17,20,33}.\\ 

In the biological context,  QSOs can be applied  in the area of population genetics. QSO can describe a distribution of the next generation when the initial distribution of the generation is provided. We shall briefly highlight how these operators are used to interpret  in population genetics. Consider a biological population, i.e., a community of organisms that is closed with regard to procreation. Assume that every individual in this population belongs to one of the following varying species (traits): $\lbrace1,\cdots, m\rbrace $. Let $x^{(0)}=(x_1^{(0)},\cdots,x_m^{(0)})$ be a probability distribution of species at an initial state and let  the heredity coefficient $ p_{ij,k} $ be the conditional probability  $ p(k \backslash i,j ) $ that $ i^{th} $ and  $ j^{th} $ species have interbred successfully to produce an individual $ k^{th}$. The first generation $x^{(1)}=\left(x_{1}^{(1)},\cdots,x_{m}^{(1)}\right)$ can be calculated using the total probability $ x_k^{(1)}=\sum_{i,j=1}^m   p(k \backslash i,j ) P(i,j), \quad k=\overline{1,m}$. “Given that  no difference exists between  $ i^{th} $ and  $ j^{th} $   in any generation, the “parents” $ i,j $ are independent, i.e., $ P(i,j)=P_{i}P_{j} $. This condition suggests that  \begin{equation*} x_k^{(1)}=\sum_{i,j=1}^m P_{ij,k}  x_i^{(0)}  x_j^{(0)}, \quad k=\overline{1,m}.
\end{equation*}\\
Consequently, the relation  $ x^{(0)}\rightarrow x^{(1)} $ represents  a mapping $V$, which is known as the evolution operator.  Starting  from the selected initial state $ x^{(0)} $, the population  develops to the first generation $ x^{(1)}=V(x^{(0)}) $, and then to the second generation $x^{(2)}=V(x^{(1)})=V(V(x^{(0)}))=V^{(2)}\left(x^{(0)}\right)$, and so on. Hence, the discrete dynamical system presents  population system evolution states as follows: 
$$x^{(0)}, \quad x^{(1)}=V\left(x^{(0)}\right), \quad x^{(2)}=V^{(2)}\left(x^{(0)}\right), \quad  \cdots .$$ One of the main issues that underlies this theory  is  finding  the limit points of $ V $ for any  arbitrary initial point $ x^{(0)} $. Studying  the limit points of QSOs is a complicated task even in 2D simplex. This problem has not yet been  solved. Numerous researchers have presented a specific class of QSO and have examined  their behavior, e.g., F-QSO \cite{28}, Volterra-QSO \cite{7,8,9}, permutated Volterra-QSO \cite{12,13},  $\ell$-Volterra-QSO \cite{26,27}, Quasi-Volterra-QSO \cite{5}, non-Volterra-QSO \cite{6,31}, strictly non-Volterra-QSO \cite{29},  non-Volterra operators, and  others produced via measurements  \cite{3,4}. An attempt was made to study the behavior of nonlinear operators, which is regarded  as the main problem in nonlinear operators.  However this problem has not been studied comprehensively  because it depends on a given cubic matrix  $(P_{ijk})_{i,j,k=1}^m$. Nevertheless,   these classes together cannot cover a set of
all QSOs. \\
\\
Recently, the author of  \cite{36} introduced $\xi^{(as)}$-QSO, which is a new class of QSOs that depend on a partition of the coupled index sets (which have couple traits) ${\bf P}_{m}=\{(i,j): i<j\}\subset I\times I$ and  ${\bf\Delta}_{m}=\{(i,i): i\in N\}\subset I\times I$. In case of 2D  simplex $(m=3)$, ${\bf P}_{3}$ and $\bf \Delta_{m} $  have five possible partitions.
\\
\\
In \cite{36,41}, the $\xi^{(s)}$-QSO related to  $| \xi_1 |=2 $ of  $\bf P_{3} $  with point a partition of $\bf \Delta_{3} $ was investigated. In \cite{22}, the $\xi^{(a)}$-QSO related to  $| \xi_1 |=2 $ of  $\bf P_{3} $  with a trivial  partition of $\bf \Delta_{3} $ was studied.  The  $\xi^{(as)}$-QSO   related to  $| \xi_1 |=3 $ of  $\bf P_{m} $  with a point partition of  $\bf \Delta_{3} $ was examined in \cite{21}. Furthermore, the  $\xi^{(s)}$-QSO and  $\xi^{(a)}$-QSO  related to $| \xi_1 |=1 $ of $\bf P_{3} $  with point and trivial partitions  of  $\bf \Delta_{3} $, respectively, were  discussed  in\cite{391}. Therefore,  some partitions of $\bf \Delta_{3} $   which have not yet been studied. The  current work describes and classifies the operators generated  by $\xi^{(as)}$-QSO  with a cardinality $| \xi_{i} |=2 $ of $\bf P_{3} $  and $\bf \Delta_{3} $  generated  also by  $| \xi_{i} |=2 $. The rest of this paper is organized as follows. Section 2 establishes a number of preliminary definitions.  Section 3  presents the description and classification of  $\xi^{(as)}$-QSO. Section 4 elucidates  the study  examines the behavior of $V_{3}$ and $V_{15}$ obtained from classes $ G_{3} $ and $ G_{9} $, respectively. Section 5 examines  the behavior of  $V_{26}$ and $V_{25}$ obtained from classes $ G_{13} $ and $ G_{14} $, respectively. 

\section{Preliminaries}
Several basic concepts are recalled in this section.
\begin{definition}  
QSO is a mapping of the simplex
\begin{eqnarray}\label{p.1}
S^{m-1}=\left\{x=(x_{1},\cdots, x_{m})\in\mathbb{R}^m:
\ \ \sum_{i=1}^m x_{i}=1, \ \  x_{i}\geq0, \ \ i=\overline{1,m} \right\}
\end{eqnarray}
into itself with the form
\begin{eqnarray}\label{p.2}
x'_{k}=\sum_{i,j=1}^mP_{ij,k}x_{i}x_{j}, \ \ \ k=\overline{1,m},
\end{eqnarray}
where $V(x)=x'=(x'_1,\cdots,x'_m)$, and $P_{ij,k}$ is a coefficient of heredity that satisfies the following conditions:
\begin{eqnarray}\label{p.3}
P_{ij,k}\geq0,\quad P_{ij,k}=P_{ji,k},\quad \sum_{k=1}^m P_{ij,k}=1.
\end{eqnarray}
\end{definition}
From the preceding definition, we can conclude that each QSO  $V:S^{m-1}\to S^{m-1}$ can be uniquely defined by a cubic matrix ${\mathcal{P}}=\big(P_{ijk}\big)_{i,j,k=1}^m$ with conditions \eqref{p.3}.
\\
\\
For $V:S^{m-1}\to S^{m-1}$, we denote the set of fixed points as $Fix(V)$. Moreover, for $x^{(0)}\in S^{m-1}$, we denote the set of limiting points as ${\omega_{V}}(x^{(0)})$. \\

Recall that  Volterra-QSO is defined by \eqref{p.2}, \eqref{p.3}, and the additional assumption
\begin{equation}\label{Vol}
P_{ij,k}=0  \quad \text{if}\quad  k\not\in \{i,j\}. 
\end{equation}

The biological treatment of Condition \eqref{Vol} is clear: \textit{the offspring repeats the genotype (trait) of one of its parents.}. Volterra-QSO exhibits the following form:
\begin{equation}\label{Vol2}
x'_k=x_k\left(1+\sum^m_{i=1}a_{ki}x_i\right),\ \ k\in
I,
\end{equation} 
where
\begin{equation}\label{p.8}
a_{ki}=2P_{ik,k}-1 \ \ \mbox{for}\, i\neq k\,\,\mbox{and}
\ \,a_{ii}=0, \ i \in I.
\end{equation}
Moreover,
  
$$a_{ki}=-a_{ik}\ \ \mbox{and} \ \ |a_{ki}| \leq 1.$$

This type of operator was intensively studied in  \cite{7,8,9}.
\\
\\
The concept of $\ell$-Volterra-QSO was introduced in \cite{26}. This concept is recalled as follows. 
\\
\\
Let $\ell \in I$ be fixed.  Suppose that the heredity coefficient $\left\lbrace P_{ij,k}\right\rbrace $ satisfies
\begin{equation}\label{p.11}
 P_{ij,k}= 0 \ \ \mbox{if} \ \ k \not\in \{i,j\}\ \ \mbox{for }\ \mbox{any}\ \ k\in \{1,\dots,\ell\},\ \ i,j \in I,
\end{equation}
\begin{equation}\label{p.11a}
 P_{i_0j_0,k}> 0 \ \ \text{for some} \  (i_0,j_0),\  i_0\neq
k,\ j_0\neq k, \ \  k \in
\{\ell+1,\dots,m\}.
\end{equation}

Therefore, the QSO defined by \eqref{p.2}, \eqref{p.3},
\eqref{p.11}, and \eqref{p.11a} is called {\it $\ell$-Volterra-QSO}.

\begin{remark} Here, we emphasize the following points:
\begin{enumerate}
\item[(i)] An $\ell$-Volterra-QSO is a Volterra-QSO if and only if
$\ell= m$.

\item[(ii)] No periodic
trajectory exists for Volterra-QSO \cite{7}. However,
such trajectories exist for $\ell$-Volterra-QSO \cite{26}.
\end{enumerate}
\end{remark}

In accordance with \cite{36}, each element $x\in S^{m-1}$ is a probability distribution of  set $I=\{1,...,m\}$. Let $x=(x_{1},\cdots,x_{m})$ and $y=(y_{1},\cdots,y_{m})$ be vectors obtained from $S^{m-1}$. We say that {\it $x$ is equivalent to $y$} if $x_{k}=0$ $\Leftrightarrow$ $y_{k}=0$. We denote this relation as $x\sim y$.
\\
\\
Let $supp(x)=\{i:x_{i}\neq0\}$ be a support of $x\in S^{m-1}$. We say that {\it $x$ is singular to  $y$} and denote this relation as $x\perp y$ if $supp(x)\cap supp(y)=\emptyset.$ Notably  if $x,y\in S^{m-1}$, then $x\perp y$ if and only if $(x,y)=0$, where $(\cdot,\cdot)$ denotes  a standard inner product in $\mathbb{R}^m$.
\\
\\
We denote sets of coupled indexes as
$${\bf P}_m=\{(i,j):\ i<j\}\subset I\times I, \quad \Delta_m=\{(i,i): i\in I\}\subset I\times I.$$
For a given pair $(i,j)\in \mathbf{P}_m\cup \Delta_m$,  a  vector $\mathbb{P}_{ij}=\left(P_{ij,1},\cdots, P_{ij,m}\right)$ is set. Evidently,  $\mathbb{P}_{ij}\in S^{m-1}$.
\\
\\
Let $\xi_1=\{A_{i}\}_{i=1}^N$ and $\xi_2=\{B_{i}\}_{i=1}^M$ be 
fixed partitions of ${\bf P}_m$ and $\Delta_m$, respectively,  i.e.,
$A_{i}\bigcap A_{j}=\emptyset$, $B_{i}\bigcap B_{j}=\emptyset$, 
$\bigcup\limits_{i=1}^N A_{i}=\mathbf{P}_m$, $\bigcup\limits_{i=1}^M
B_{i}=\Delta_m$, where $N,M\leq m$.

\begin{definition}\label{Xi-QSO}\cite{36}
QSO $V:S^{m-1}\to S^{m-1}$ is given by
\eqref{p.2},\eqref{p.3}, is considered a $\xi^{(as)}$-QSO w.r.t. 
partitions $\xi_1$ and $\xi_2$  if the following conditions are
satisfied:
 \begin{enumerate}
\item[(i)] For each $k\in\{1,\dots,N\}$ and any
$(i,j)$, $(u,v)\in A_{k}$,   $\mathbb{P}_{ij}\sim \mathbb{P}_{uv}$is considered.

 \item[(ii)] For any $k\neq \ell$, $k,\ell\in\{1,\dots,N\}$ and
 any $(i,j)\in A_{k}$ and $(u,v)\in A_{\ell}$,   $\mathbb{P}_{ij}\perp\mathbb{P}_{uv}$ is considered.

\item[(iii)] For each $d\in\{1,\dots,M\}$ and any
$(i,i)$, $(j,j)\in B_{d}$,  $\mathbb{P}_{ii}\sim
\mathbb{P}_{jj}$ is considered.

 \item[(iv)] For any $s\neq h$, $s,h\in\{1,\dots,M\}$ and
 any $(u,u)\in B_{s}$ and $(v,v)\in B_{h}$,
  $\mathbb{P}_{uu}\perp\mathbb{P}_{vv}$ is considered.
 \end{enumerate}
\end{definition}
\section{Classification of  $ \xi^{(as)} $- QSO operators}
This section presents the description and classification of $ \xi^{(as)} $-QSO in 2D simplex when $ m=3 $ and the cardinality of the potential partitions of $ \mathbf{P}_m $ and $\bf \Delta_{m}$ are equal to 2. Therefore, the potential partitions of $ \mathbf{P}_3 $ are listed as follows: \\

\begin{eqnarray*}
\xi_1:&=&\{ \lbrace(1,2)\rbrace,\lbrace (1,3)\rbrace, \lbrace( 2,3 )\rbrace \}, \vert \xi_1 \vert =3, \\
\xi_2:&=&\{ \lbrace(2,3) \rbrace,\lbrace (1,2),(1,3)\rbrace \}, \vert \xi_2 \vert =2, \\
\xi_3:&=&\{ \lbrace(1,3) \rbrace,\lbrace (1,2),(2,3)\rbrace \}, \vert \xi_3 \vert =2, \\
\xi_4:&=&\{ \lbrace(1,2) \rbrace,\lbrace (1,3),(2,3)\rbrace \}, \vert \xi_4 \vert =2, \\
\xi_5:&=&\{ (1,2) , (1,3),(2,3)\} ,    \vert \xi_5 \vert=1. \\
\end{eqnarray*}

The potential  partitions of $\bf \Delta_{3}$ are listed as follows: \\
\begin{eqnarray*}
\xi_1:&=&\{ \lbrace(1,1)\rbrace,\lbrace (2,2)\rbrace, \lbrace( 3,3 )\rbrace \}, \vert \xi_1 \vert =3, \\
\xi_2:&=&  \{ (1,1) , (2,2),(3,3)\} , \vert \xi_2 \vert =1,\\
\xi_3:&=&\{ \lbrace(1,1) \rbrace,\lbrace (2,2),(3,3)\rbrace \}, \vert \xi_3 \vert =2, \\
\xi_4:&=&\{ \lbrace(3,3) \rbrace,\lbrace (1,1),(2,2)\rbrace \}, \vert \xi_4 \vert =2, \\
\xi_5:&=& \{ \lbrace(2,2) \rbrace,\lbrace (1,1),(3,3)\rbrace \}, \vert \xi_5 \vert =2. \\
\end{eqnarray*}

\begin{proposition}\label{H1}
For a class of $ \xi^{(as)} $-QSO generated from the possible partitions of $ \mathbf{P}_3 $ and $\bf \Delta_{3}$ with cardinals equal to 2, we determine the following:
\begin{enumerate}
\item[(a)] A class of all $ \xi^{(as)} $-QSO that correspond to  partition $\xi_3  $ of $ \mathbf{P}_3 $ and  partition $\xi_5  $ of $ \bf \Delta_{3}$ is conjugate to a class of all $ \xi^{(as)} $-QSO that correspond to  partition  $\xi_2  $ of $ \mathbf{P}_3 $ and  partition $\xi_3  $ of $ \bf \Delta_{3}$.
\item[(b)] A class of all $ \xi^{(as)} $-QSO that correspond to the partition $\xi_4  $ of $ \mathbf{P}_3 $ and  partition $\xi_4  $ of $ \bf \Delta_{3}$ is conjugate to a classes of all $ \xi^{(as)} $-QSO that correspond to partition  $\xi_2  $ of $ \mathbf{P}_3 $ and  partition $\xi_3  $ of $ \bf \Delta_{3}$.
\end{enumerate}

\end{proposition}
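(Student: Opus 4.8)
The plan is to interpret ``conjugacy'' as a relabelling of the three traits, i.e.\ as conjugation by a coordinate permutation $\pi\in S_3$, and then, for each item, to exhibit a single permutation carrying the source pair of partitions onto the target pair. Concretely, for $\pi\in S_3$ let $T_\pi:S^2\to S^2$ be the coordinate permutation $(T_\pi x)_k=x_{\pi^{-1}(k)}$; it maps the simplex onto itself and $T_\pi^{-1}=T_{\pi^{-1}}$. Substituting into \eqref{p.2} one checks that $W=T_\pi^{-1}\circ V\circ T_\pi$ is again a QSO whose heredity coefficients are $P^{W}_{ab,k}=P_{\pi(a)\pi(b),\,\pi(k)}$, where the symmetry $P_{ij,k}=P_{ji,k}$ from \eqref{p.3} lets us ignore the order of the first two indices. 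In particular the associated vector $\mathbb{P}^{W}_{ab}$ is just $\mathbb{P}_{\pi(a)\pi(b)}$ with its entries reshuffled by the one fixed permutation $\pi$.

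The key point is that both relations appearing in Definition~\ref{Xi-QSO}, equivalence $\sim$ (equal zero pattern) and singularity $\perp$ (disjoint supports), are invariant when the \emph{same} coordinate permutation is applied to both arguments. Hence $\mathbb{P}^{W}_{ab}\sim\mathbb{P}^{W}_{cd}$ iff $\mathbb{P}_{\pi(a)\pi(b)}\sim\mathbb{P}_{\pi(c)\pi(d)}$, and likewise for $\perp$. From this I would deduce the general principle: if $V$ is a $\xi^{(as)}$-QSO with respect to $(\xi_1,\xi_2)$, then $W=T_\pi^{-1}VT_\pi$ is a $\xi^{(as)}$-QSO with respect to the relabelled partitions $\pi^{-1}(\xi_1)$ and $\pi^{-1}(\xi_2)$, each pair $(i,j)$ being moved to $(\pi^{-1}(i),\pi^{-1}(j))$ and reordered so the smaller index comes first. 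Consequently two classes are conjugate as soon as one $\pi$ sends the two defining partitions of the first class onto those of the second, and this reduces (a) and (b) to producing such a $\pi$.

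For (a) the transposition $\pi=(1\,2)$ works: on $\mathbf P_3$ it sends $(1,3)\mapsto(2,3)$, $(1,2)\mapsto(1,2)$ and $(2,3)\mapsto(1,3)$, so the singleton $\{(1,3)\}$ goes to $\{(2,3)\}$ and the pair block $\{(1,2),(2,3)\}$ goes to $\{(1,2),(1,3)\}$; hence $\xi_3=\{\{(1,3)\},\{(1,2),(2,3)\}\}$ becomes $\xi_2=\{\{(2,3)\},\{(1,2),(1,3)\}\}$. On $\Delta_3$ it swaps $(1,1)\leftrightarrow(2,2)$ and fixes $(3,3)$, so $\xi_5=\{\{(2,2)\},\{(1,1),(3,3)\}\}$ becomes $\xi_3=\{\{(1,1)\},\{(2,2),(3,3)\}\}$. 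For (b) the transposition $\pi=(1\,3)$ works: on $\mathbf P_3$ it sends $(1,2)\mapsto(2,3)$, $(1,3)\mapsto(1,3)$ and $(2,3)\mapsto(1,2)$, so $\xi_4=\{\{(1,2)\},\{(1,3),(2,3)\}\}$ becomes $\xi_2$; on $\Delta_3$ it swaps $(1,1)\leftrightarrow(3,3)$ and fixes $(2,2)$, so $\xi_4=\{\{(3,3)\},\{(1,1),(2,2)\}\}$ becomes $\xi_3$. Since both permutations are involutions, the direction of conjugation is immaterial and the matching holds both ways.

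The finite matching above is routine; the step requiring genuine care is the invariance principle. There I would verify the coefficient formula $P^{W}_{ab,k}=P_{\pi(a)\pi(b),\pi(k)}$ directly from \eqref{p.2}, confirm that the stochasticity and symmetry conditions \eqref{p.3} are preserved so that $W$ is a bona fide QSO, and then check that each of conditions (i)--(iv) of Definition~\ref{Xi-QSO} transforms exactly into the corresponding condition for the relabelled partitions, tracking the reordering $(i,j)\mapsto(\min,\max)$ throughout. Once that lemma is in place, parts (a) and (b) follow immediately from the explicit involutions $(1\,2)$ and $(1\,3)$.
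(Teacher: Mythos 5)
Your proposal is correct and follows essentially the same route as the paper: conjugate by a coordinate permutation, observe that the heredity coefficients transform by $P^{W}_{ab,k}=P_{\pi(a)\pi(b),\pi(k)}$ so that $\sim$ and $\perp$ are preserved and the defining partitions are simply relabelled, then exhibit an explicit permutation matching the two pairs of partitions. The only difference is the choice of permutation --- you use the transpositions $(1\,2)$ and $(1\,3)$ while the paper uses the $3$-cycles $\begin{pmatrix}1&2&3\\3&1&2\end{pmatrix}$ and $\begin{pmatrix}1&2&3\\2&3&1\end{pmatrix}$ --- and both choices carry the source partitions onto the target ones, so this is immaterial.
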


\begin{proof}
$(a)$ In accordance with the general form of QSO given by \eqref{p.2},\eqref{p.3}, the coefficients $ \left( P_{ij,k}\right) _{i,j,k=1}^{3} $  of  operator $ V $ in $ \xi^{(as)} $-QSO that correspond to  partition $ \xi_5= \{ \lbrace(2,2) \rbrace,\lbrace (1,1),(3,3)\rbrace \} $ of $ \bf \Delta_{3} $ and partition $ \xi_3=\{ \lbrace(1,3) \rbrace,\lbrace (1,2),(2,3)\rbrace \} $ of $ \mathbf{P}_3 $  satisfy the following conditions:\\

i. $ \mathbb{P}_{11} \sim \mathbb{P}_{33} $ and $ \mathbb{P}_{22} \bot  \mathbb{P}_{mm} $, $ m=1,3 $; $  \quad$
ii. $ \mathbb{P}_{12} \sim \mathbb{P}_{23} $ and $ \mathbb{P}_{13} \bot  ( \mathbb{P}_{12},\mathbb{P}_{23} )$;\\

where $\mathbb{P}_{ij} =( p_{ij,1},p_{ij,2},p_{ij,3}) $.\\

 To perform $ V_{\pi}=\pi V \pi^{-1} $ transformation on  operator $V $, where permutation $\pi=\begin{pmatrix}
1 & 2 & 3\\
3 & 1 & 2
\end{pmatrix}$.\\
\begin{center}
$ V_{\pi}:x_{k}^{\prime}= \sum_{i,j=1}^3 \mathbb{P}^{\pi}_{ij,k}x_{i}x_{j}, \ \ \ k=\overline{1,3}$,
\end{center}
such that $ \mathbb{P}^{\pi}_{ij,k} =P_{\pi (i) \pi( j), \pi (k)}$, for any $ i,j,k=\overline{1,3} $. Equivalently, $\mathbb{P}^{\pi}_{ij} =\pi \mathbb{P}_{\pi (i) \pi( j)} $ (in vector form) for any $ i,j=1,2,3 $.\\
Subsequently,  operator $ V_{\pi} $ that corresponds to  partitions $ \xi_3 $ of $ \bf \Delta_{3} $ and $ \xi_2 $ of $ \mathbf{P}_3 $ is presented by applying the permutation $ \pi $ for the coefficient of $ V $ that corresponds to  partition $ \xi_5 $ of $ \bf \Delta_{3} $ and $ \xi_3$ of $ \mathbf{P}_3 $. The following relations are derived:
\begin{enumerate}
\item[i.] $\mathbb{P}_{11} \sim \mathbb{P}_{33}  $ and $ \mathbb{P}_{22} \bot ( \mathbb{P}_{11},\mathbb{P}_{33} )$. Given that $ \mathbb{P}^{\pi}_{11}=\mathbb{P}_{33} $, $ \mathbb{P}^{\pi}_{22}=\mathbb{P}_{11} $, and $ \mathbb{P}^{\pi}_{33}=\mathbb{P}_{22} $, we obtain $ \mathbb{P}_{33} \sim \mathbb{P}_{22} $ and $  \mathbb{P}_{11} \bot ( \mathbb{P}_{22},\mathbb{P}_{33} ) $. \\ 
\item[ii.]
$\mathbb{P}_{12} \sim \mathbb{P}_{23}  $ and $ \mathbb{P}_{13} \bot ( \mathbb{P}_{12},\mathbb{P}_{23} )$. Given that $ \mathbb{P}^{\pi}_{12}=\mathbb{P}_{13} $, $ \mathbb{P}^{\pi}_{13}=\mathbb{P}_{23} $, and $ \mathbb{P}^{\pi}_{23}=\mathbb{P}_{12} $, we obtain $ \mathbb{P}_{12} \sim \mathbb{P}_{13} $ and $  \mathbb{P}_{23} \bot ( \mathbb{P}_{12},\mathbb{P}_{13} ) $. \\ 
\end{enumerate}

Similarly, we can prove b by choosing  permutation $\pi=\begin{pmatrix}
1 & 2 & 3\\
2 & 3 &1 
\end{pmatrix}$. This process completes the proof.
\end{proof}
\\

The preceding discussion shows that any $ \xi^{(as)} $-QSO obtained from the class   that corresponds
to partitions $ \xi_5 $ of $ \bf \Delta_{3} $ and $ \xi_3 $ of $ \mathbf{P}_3 $ or $ \xi_4 $ of $ \bf \Delta_{3} $ and $ \xi_4 $ of $ \mathbf{P}_3 $ is conjugate to certain $ \xi^{(as)} $-QSO obtained from the class that corresponds to partitions $ \xi_3 $ of $ \bf \Delta_{3} $ and $ \xi_2 $ of $ \mathbf{P}_3 $.\\

To investigate the operators of  class $ \xi^{(as)} $-QSO that correspond to  partitions  $\xi_2  $ of $ \mathbf{P}_3 $ and $\xi_3  $ of $\bf \Delta_{3}$, coefficient $ \left( P_{ij,k}\right) _{i,j,k=1}^{3} $ in special forms is selected as shown in Tables  (a) and   (b).\\
\begin{center}

(a)\\
\begin{tabular}{c|c||c||c}  \hline 
Case & $P_{11}$ & $P_{22}$ & $P_{33}$  \\ \hline \hline
$I_{1}$ & $\left(\alpha,\beta,0 \right)  $ & $\left(0,0,1 \right)$ & $ \left( 0,0,1 \right) $  \\ \hline
$I_{2}$ & $\left(\alpha,0,\beta \right)  $ & $\left(0,1,0 \right)$ & $ \left( 0,1,0 \right) $  \\ \hline
$I_{3}$ & $\left(\beta,\alpha,0 \right)  $ & $\left(0,0,1 \right)$ & $ \left( 0,0,1 \right) $  \\ \hline
$I_{4}$ & $\left(\beta,0,\alpha\right)  $ & $\left(0,1,0 \right)$ & $ \left( 0,1,0 \right) $  \\ \hline
$I_{5}$ & $\left(0,\alpha,\beta \right)  $ & $\left(1,0,0 \right)$ & $ \left( 1,0,0  \right) $  \\ \hline
$I_{6}$ & $\left(0,\beta,\alpha \right)  $ & $\left(1,0,0  \right)$ & $ \left( 1,0,0  \right) $  \\ \hline
\end{tabular}
\end{center}

where $ \alpha,\beta \in \left[ 0,1\right]  $. Moreover,  $ \alpha+\beta=1$.
\begin{center}
(b)\\
\begin{tabular}{c|c|c|c}  \hline 
Case & $P_{12}$ & $P_{13}$ & $P_{23}$  \\ \hline \hline
$II_{1}$ & $\left(1,0,0 \right)  $ & $\left(1,0,0 \right)$ & $ \left( 0,0,1 \right) $  \\ \hline
$II_{2}$ & $\left(1,0,0 \right)  $ & $\left(1,0,0 \right)$ & $ \left( 0,1,0 \right) $  \\ \hline
$II_{3}$ & $\left(0,1,0 \right)  $ & $\left(0,1,0 \right)$ & $ \left( 1,0,0 \right) $  \\ \hline
$II_{4}$ & $\left(0,1,0 \right)  $ & $\left(0,1,0 \right)$ & $ \left( 0,0,1 \right) $  \\ \hline
$II_{5}$ & $\left(0,0,1 \right)  $ & $\left(0,0,1 \right)$ & $ \left( 1,0,0 \right) $  \\ \hline
$II_{6}$ & $\left(0,0,1 \right)  $ & $\left(0,0,1 \right)$ & $ \left( 0,1,0 \right) $  \\ \hline
\end{tabular}
\end{center}
The choices for  Cases $(I_{j},II_{i})$, where $  i,j=1, \cdots ,6 $, provide   36 operators. These operators are defined as follows:

$$
V_{1}:=\left\{
\begin{array}{l}
x'=\alpha (x^{(0)})^2+2x^{(0)}y^{(0)}+2x^{(0)}z^{(0)} \\
y'=\beta (x^{(0)})^2\\
z'=(y^{(0)})^2 +(z^{(0)})^2+2y^{(0)}z^{(0)}
\end{array} \right.\
$$
\\
$$
 V_{2}:=\left\{
\begin{array}{l}
x'=\alpha (x^{(0)})^2+2x^{(0)}y^{(0)}+2x^{(0)}z^{(0)} \\
y'=\beta(x^{(0)})^2+2y^{(0)}z^{(0)}\\
z'=(y^{(0)})^2 +(z^{(0)})^2
\end{array} \right.\
$$
\\
$$
 V_{3}:=\left\{
\begin{array}{l}
x'=\alpha (x^{(0)})^2+2y^{(0)}z^{(0)}\\
y'=\beta (x^{(0)})^2+2x^{(0)}y^{(0)}+2x^{(0)}z^{(0)}\\
z'=(y^{(0)})^2 +(z^{(0)})^2
\end{array} \right.\
$$
\\
$$ 
V_{4}:=\left\{
\begin{array}{l}
x'=\alpha(x^{(0)})^2  \\
y'=\beta (x^{(0)})^2+2x^{(0)}y^{(0)}+2x^{(0)}z^{(0)}\\
z'=(y^{(0)})^2 +(z^{(0)})^2+2y^{(0)}z^{(0)}
\end{array}\right.\
$$
\\
$$
\quad \quad \quad V_{5}:=\left\{
\begin{array}{l}
x'=\alpha (x^{(0)})^2 +2y^{(0)}z^{(0)}\\ 
y'=\beta (x^{(0)})^2\\
z'=(y^{(0)})^2 +(z^{(0)})^2+2x^{(0)}y^{(0)}+2x^{(0)}z^{(0)}
\end{array} \right.\
$$
\\
$$ 
\quad \quad  V_{6}:=\left\{
\begin{array}{l}
x'=\alpha (x^{(0)})^2  \\
y'=\beta(x^{(0)})^2+2y^{(0)}z^{(0)}\\
z'=(y^{(0)})^2 +(z^{(0)})^2+2x^{(0)}y^{(0)}+2x^{(0)}z^{(0)}
\end{array} \right.\
$$
\\
$$
V_{7}:=\left\{
\begin{array}{l}
x'=\alpha (x^{(0)})^2 +2x^{(0)}y^{(0)}+2x^{(0)}z^{(0)} \\
y'=(y^{(0)})^2 +(z^{(0)})^2\\
z'=\beta (x^{(0)})^2+2y^{(0)}z^{(0)}
\end{array} \right.\
$$
\\
$$
V_{8}:=\left\{
\begin{array}{l}
x'=\alpha (x^{(0)})^2 +2x^{(0)}y^{(0)}+2x^{(0)}z^{(0)}\\
y'=(y^{(0)})^2 +(z^{(0)})^2+2y^{(0)}z^{(0)}\\
z'=\beta (x^{(0)})^2
\end{array} \right.\
$$
\\
$$
\quad \quad \quad V_{9}:=\left\{
\begin{array}{l}
x'=\alpha (x^{(0)})^2+2y^{(0)}z^{(0)}  \\
y'=(y^{(0)})^2 +(z^{(0)})^2+2x^{(0)}y^{(0)}+2x^{(0)}z^{(0)}\\
z'=\beta (x^{(0)})^2
\end{array} \right.\
$$
\\
$$
\quad \quad \quad V_{10}:=\left\{
\begin{array}{l}
x'=\alpha (x^{(0)})^2  \\
y'=(y^{(0)})^2 +(z^{(0)})^2+2x^{(0)}y^{(0)}+2x^{(0)}z^{(0)} \\
z'=\beta (x^{(0)})^2+2y^{(0)}z^{(0)} 
\end{array} \right.\
$$
\\
$$
V_{11}:=\left\{
\begin{array}{l}
x'=\alpha (x^{(0)})^2+2y^{(0)}z^{(0)}   \\
y'=(y^{(0)})^2 +(z^{(0)})^2\\
z'=\beta (x^{(0)})^2+2x^{(0)}y^{(0)}+2x^{(0)}z^{(0)}
\end{array} \right.\
$$
\\
$$
V_{12}:=\left\{
\begin{array}{l}
x'=\alpha (x^{(0)})^2  \\
y'=(y^{(0)})^2 +(z^{(0)})^2 +2y^{(0)}z^{(0)}\\
z'=\beta  (x^{(0)})^2+2x^{(0)}y^{(0)}+2x^{(0)}z^{(0)}
\end{array} \right.\
$$
\\
$$
V_{13}:=\left\{
\begin{array}{l}
x'=  \beta (x^{(0)})^2+2x^{(0)}y^{(0)}+2x^{(0)}z^{(0)}\\
y'=\alpha (x^{(0)})^2\\
z'=(y^{(0)})^2 +(z^{(0)})^2 +2y^{(0)}z^{(0)}
\end{array} \right.\
$$
\\
$$
V_{14}:=\left\{
\begin{array}{l}
x'=  \beta (x^{(0)})^2+2x^{(0)}y^{(0)}+2x^{(0)}z^{(0)}\\
y'=\alpha (x^{(0)})^2+2y^{(0)}z^{(0)}\\
z'=(y^{(0)})^2 +(z^{(0)})^2
\end{array} \right.\
$$
\\
$$
V_{15}:=\left\{
\begin{array}{l}
x'=  \beta (x^{(0)})^2+2y^{(0)}z^{(0)}\\
y'=\alpha (x^{(0)})^2+2x^{(0)}y^{(0)}+2x^{(0)}z^{(0)}\\
z'=(y^{(0)})^2 +(z^{(0)})^2
\end{array} \right.\
$$
\\
$$ 
V_{16}:=\left\{
\begin{array}{l}
x'=  \beta(x^{(0)})^2\\
y'=\alpha  (x^{(0)})^2+2x^{(0)}y^{(0)}+2x^{(0)}z^{(0)}\\
z'=(y^{(0)})^2 +(z^{(0)})^2+2y^{(0)}z^{(0)}
\end{array} \right.\
$$
\\
$$
\quad \quad \quad V_{17}:=\left\{
\begin{array}{l}
x'=  \beta(x^{(0)})^2+2y^{(0)}z^{(0)}\\
y'=\alpha  (x^{(0)})^2\\
z'=(z^{(0)})^2+(y^{(0)})^2+2x^{(0)}y^{(0)}+2x^{(0)}z^{(0)}
\end{array} \right.\
$$
\\
$$ 
\quad \quad \quad  V_{18}:=\left\{
\begin{array}{l}
x'=  \beta (x^{(0)})^2\\
y'=\alpha (x^{(0)})^2+2y^{(0)}z^{(0)}\\
z'=(z^{(0)})^2+(y^{(0)})^2+2x^{(0)}y^{(0)}+2x^{(0)}z^{(0)}
\end{array} \right.\
$$
\\
$$
V_{19}:=\left\{
\begin{array}{l}
x'=  \beta(x^{(0)})^2+2x^{(0)}y^{(0)}+2x^{(0)}z^{(0)}\\
y'=(y^{(0)})^2 +(z^{(0)})^2\\
z'=\alpha (x^{(0)})^2+2y^{(0)}z^{(0)}
\end{array} \right.\
$$
\\
$$
V_{20}:=\left\{
\begin{array}{l}
x'=  \beta (x^{(0)})^2+2x^{(0)}y^{(0)}+2x^{(0)}z^{(0)}\\
y'=(y^{(0)})^2 +(z^{(0)})^2+2y^{(0)}z^{(0)}\\
z'=\alpha (x^{(0)})^2
\end{array} \right.\
$$
\\
$$
\quad \quad \quad V_{21}:=\left\{
\begin{array}{l}
x'=  \beta (x^{(0)})^2+2y^{(0)}z^{(0)}\\
y'=(y^{(0)})^2 +(z^{(0)})^2+2x^{(0)}y^{(0)}+2x^{(0)}z^{(0)}\\
z'=\alpha (x^{(0)})^2
\end{array} \right.\
$$
\\
$$ 
\quad \quad \quad V_{22}:=\left\{
\begin{array}{l}
x'=  \beta (x^{(0)})^2\\
y'=(y^{(0)})^2 +(z^{(0)})^2+2x^{(0)}y^{(0)}+2x^{(0)}z^{(0)}\\
z'=\alpha (x^{(0)})^2+2y^{(0)}z^{(0)}
\end{array} \right.\
$$
\\
$$
V_{23}:=\left\{
\begin{array}{l}
x'=  \beta (x^{(0)})^2+2y^{(0)}z^{(0)}\\
y'=(y^{(0)})^2 +(z^{(0)})^2\\
z'=\alpha (x^{(0)})^2+2x^{(0)}y^{(0)}+2x^{(0)}z^{(0)}
\end{array} \right.\
$$
\\
$$
V_{24}:=\left\{
\begin{array}{l}
x'=  \beta (x^{(0)})^2\\
y'=(y^{(0)})^2 +(z^{(0)})^2+2z^{(0)}y^{(0)}\\
z'=\alpha (x^{(0)})^2+2x^{(0)}y^{(0)}+2x^{(0)}z^{(0)}
\end{array} \right.\
$$
\\
$$
\quad \quad \quad V_{25}:=\left\{
\begin{array}{l}
x'=(y^{(0)})^2 +(z^{(0)})^2+2x^{(0)}y^{(0)}+2x^{(0)}z^{(0)} \\
y'=\alpha (x^{(0)})^2\\
z'=\beta (x^{(0)})^2+2y^{(0)}z^{(0)}
\end{array} \right.\
$$
\\
$$
\quad \quad \quad V_{26}:=\left\{
\begin{array}{l}
x'=(y^{(0)})^2 +(z^{(0)})^2+2x^{(0)}y^{(0)}+2x^{(0)}z^{(0)}  \\
y'=\alpha (x^{(0)})^2+2y^{(0)}z^{(0)}\\
z'=\beta (x^{(0)})^2
\end{array} \right.\
$$
\\
$$
V_{27}:=\left\{
\begin{array}{l}
x'= (y^{(0)})^2 +(z^{(0)})^2+2z^{(0)}y^{(0)}\\
y'=\alpha (x^{(0)})^2+2x^{(0)}y^{(0)}+2x^{(0)}z^{(0)}\\
z'=\beta (x^{(0)})^2
\end{array} \right.\
$$
\\
$$
V_{28}:=\left\{
\begin{array}{l}
x'=(y^{(0)})^2 +(z^{(0)})^2 \\
y'=\alpha (x^{(0)})^2+2x^{(0)}y^{(0)}+2x^{(0)}z^{(0)}\\
z'=\beta  (x^{(0)})^2+2y^{(0)}z^{(0)}
\end{array} \right.\
$$
\\
$$
V_{29}:=\left\{
\begin{array}{l}
x'= (y^{(0)})^2 +(z^{(0)})^2+2z^{(0)}y^{(0)}\\
y'=\alpha (x^{(0)})^2\\
z'=\beta (x^{(0)})^2+2x^{(0)}y^{(0)}+2x^{(0)}z^{(0)}
\end{array} \right.\
$$
\\
$$
V_{30}:=\left\{
\begin{array}{l}
x'=(y^{(0)})^2 +(z^{(0)})^2 \\
y'=\alpha  (x^{(0)})^2+2y^{(0)}z^{(0)}\\
z'=\beta (x^{(0)})^2+2x^{(0)}y^{(0)}+2x^{(0)}z^{(0)}
\end{array} \right.\
$$
\\
$$
\quad \quad \quad V_{31}:=\left\{
\begin{array}{l}
x'= (y^{(0)})^2 +(z^{(0)})^2+2x^{(0)}y^{(0)}+2x^{(0)}z^{(0)}\\
y'=\beta (x^{(0)})^2 \\
z'=\alpha(x^{(0)})^2+2y^{(0)}z^{(0)}
\end{array} \right.\
$$
\\
$$
 \quad \quad \quad V_{32}:=\left\{
\begin{array}{l}
x'=(y^{(0)})^2 +(z^{(0)})^2+2x^{(0)}y^{(0)}+2x^{(0)}z^{(0)} \\
y'=\beta (x^{(0)})^2+2y^{(0)}z^{(0)}\\
z'=\alpha (x^{(0)})^2 
\end{array} \right.\
$$
\\
$$
V_{33}:=\left\{
\begin{array}{l}
x'=(y^{(0)})^2 +(z^{(0)})^2 +2z^{(0)}y^{(0)} \\
y'=\beta (x^{(0)})^2+2x^{(0)}y^{(0)}+2x^{(0)}z^{(0)}\\
z'=\alpha (x^{(0)})^2
\end{array} \right.\
$$
\\
$$
V_{34}:=\left\{
\begin{array}{l}
x'=(y^{(0)})^2 +(z^{(0)})^2  \\
y'=\beta (x^{(0)})^2+2x^{(0)}y^{(0)}+2x^{(0)}z^{(0)}\\
z'=\alpha(x^{(0)})^2+2y^{(0)}z^{(0)}
\end{array} \right.\
$$
\\
$$
V_{35}:=\left\{
\begin{array}{l}
x'=(y^{(0)})^2 +(z^{(0)})^2 +2z^{(0)}y^{(0)} \\
y'=\beta (x^{(0)})^2\\
z'=\alpha (x^{(0)})^2+2x^{(0)}y^{(0)}+2x^{(0)}z^{(0)}
\end{array} \right.\
$$
\\
$$
V_{36}:=\left\{
\begin{array}{l}
x'=(y^{(0)})^2 +(z^{(0)})^2  \\
y'=\beta (x^{(0)})^2+2y^{(0)}z^{(0)}\\
z'=\alpha (x^{(0)})^2+2x^{(0)}y^{(0)}+2x^{(0)}z^{(0)}
\end{array} \right.\
$$
\\

Evidently,  class $\xi^{(as)}$-QSO contains 36 operators, which operators are too numerous to  explore individually
. Therefore, we classify such operators  into small classes and examine only the operators within these classes.

\begin{theorem}\label{500} Let $ \left\lbrace V_{1},\cdots,V_{36} \right\rbrace $ be the $\xi^{(as)}$ -QSO presented  above. Then, these operators are divided into  18 non-isomorphic classes:

\begin{tabular}{c c c c c c}\\
$\quad G_{1}=\{ V_{1},V_{8}\}$, & $\quad G_{2}=\{ V_{2},V_{7}\}$, & 
$\quad G_{3}=\{ V_{3},V_{11}\}$,& 
$\quad G_{4}=\{ V_{4},V_{12}\}$,\\
\\
$\quad G_{5}=\{ V_{5}, V_{9}\}$,&
$\quad G_{6}=\{ V_{6},V_{10}\}$,&
$\quad G_{7}=\{ V_{13},V_{20}\}$, & $\quad G_{8}=\{ V_{14},V_{19}\}$, &\\
\\
 $\quad \quad G_{9}=\{ V_{15},V_{23}\}$,& 
$\quad G_{10}=\{ V_{16},V_{24}\}$,&
$\quad G_{11}=\{ V_{17},V_{21}\}$,&
$\quad G_{12}=\{ V_{18},V_{22}\}$,&\\
\\
$\quad \quad G_{13}=\{ V_{25},V_{32}\}$, & $\quad G_{14}=\{ V_{26},V_{31}\}$, & 
$\quad G_{15}=\{ V_{27},V_{35}\}$,&
$\quad G_{16}=\{ V_{28},V_{36}\}$,\\ 
\\
$ \quad\quad G_{17}=\{ V_{29},V_{33}\}$,&
$\quad G_{18}=\{ V_{30},V_{34}\}$.&
\end{tabular}

\end{theorem}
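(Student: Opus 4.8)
The plan is to realize the classification as the orbit decomposition of the $36$ operators under the subgroup of $S_3$ that preserves the defining partitions. First I would record the conjugation rule from the excerpt, $\mathbb{P}^{\pi}_{ij}=\pi\,\mathbb{P}_{\pi(i)\pi(j)}$, and observe (exactly as in the computation behind Proposition \ref{H1}) that conjugating a $\xi^{(as)}$-QSO by $\pi$ again yields a $\xi^{(as)}$-QSO, but with respect to the partitions obtained by letting $\pi$ act on the index pairs. Since every $V_n$ belongs to the family attached to $\xi_2$ of $\mathbf{P}_3$ and $\xi_3$ of $\Delta_3$, a conjugate $\pi V_n\pi^{-1}$ can again be one of the $V_m$ only when $\pi$ stabilises both partitions. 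Each partition carries a distinguished singleton block, namely $\{(1,1)\}$ for $\xi_3$ of $\Delta_3$ and $\{(2,3)\}$ for $\xi_2$ of $\mathbf{P}_3$, and stabilising either one already forces $\pi(1)=1$. Hence the only relevant group is $\{e,\tau\}$ with $\tau=(2\,3)$, and the classes must be the orbits of the involution $\tau$.

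Next I would compute the action of $\tau$ on the combinatorial data. Writing each operator as a pair $(I_j,II_i)$ of a diagonal type from Table (a) and an off-diagonal type from Table (b), the rules $\mathbb{P}^{\tau}_{ii}=\tau\,\mathbb{P}_{\tau(i)\tau(i)}$ and $\mathbb{P}^{\tau}_{ij}=\tau\,\mathbb{P}_{\tau(i)\tau(j)}$, with $\tau$ swapping the second and third coordinates of every vector, show that $\tau$ permutes the diagonal types by the involution $(I_1\,I_2)(I_3\,I_4)(I_5\,I_6)$ and the off-diagonal types by $(II_1\,II_2)(II_3\,II_5)(II_4\,II_6)$. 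Both involutions are fixed-point-free, so no pair $(I_j,II_i)$ is $\tau$-invariant; consequently every $\tau$-orbit has exactly two elements and the $36$ operators split into $18$ pairs.

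It then remains to translate the pairs $(I_j,II_i)$ into the labels $V_n$ and to check that the $18$ resulting two-element orbits coincide with $G_1,\dots,G_{18}$. This is the computational core: one reads off the coefficient cube of each $V_n$ from its displayed formula to recover its $(I_j,II_i)$ type, then applies the two involutions above. For example $V_1=(I_1,II_1)\mapsto(I_2,II_2)=V_8$, $V_3=(I_1,II_3)\mapsto(I_2,II_5)=V_{11}$, $V_{13}=(I_3,II_1)\mapsto(I_4,II_2)=V_{20}$, and $V_{25}=(I_5,II_1)\mapsto(I_6,II_2)=V_{32}$, reproducing $G_1,G_3,G_7,G_{13}$; the remaining classes are verified identically.

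Finally, to see that the $18$ classes are genuinely non-isomorphic I would rule out any further identification. If $\pi\neq e,\tau$ then $\pi(1)\neq1$, so $\{\pi(2),\pi(3)\}$ contains $1$, and the equality $\mathbb{P}^{\pi}_{22}=\mathbb{P}^{\pi}_{33}$ would force an equality between $\mathbb{P}_{11}$ and one of $\mathbb{P}_{22},\mathbb{P}_{33}$. But in this family $\mathbb{P}_{11}\perp\mathbb{P}_{22}=\mathbb{P}_{33}$, and two singular probability vectors have disjoint nonempty supports, hence can never coincide; thus $\mathbb{P}^{\pi}_{22}\neq\mathbb{P}^{\pi}_{33}$, and $\pi V_n\pi^{-1}$ violates the defining equality $\mathbb{P}_{22}=\mathbb{P}_{33}$ shared by all $36$ operators, so it is not among them. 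Therefore no operator in one $G_i$ is conjugate to an operator in another, completing the classification. The only genuine labor, and the step most prone to slips, is the exhaustive dictionary between the explicit formulas $V_1,\dots,V_{36}$ and the index pairs $(I_j,II_i)$; once that is pinned down the orbit structure is forced, and, pleasantly, the non-conjugacy argument requires no genericity hypothesis on $\alpha,\beta$.
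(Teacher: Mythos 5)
Your proposal is correct and follows essentially the same route as the paper: the classes are exactly the orbits of the $36$ operators under conjugation by the transposition $y\leftrightarrow z$, which is the unique nontrivial permutation stabilising the partitions $\xi_2$ of $\mathbf{P}_3$ and $\xi_3$ of $\Delta_3$. Your write-up is in fact more complete than the paper's, which merely asserts the invariance of the partitions and computes the single example $\pi V_1\pi^{-1}=V_8$; your bookkeeping via the fixed-point-free involutions on the types $(I_j,II_i)$ and your explicit argument that $\pi V_n\pi^{-1}$ leaves the family when $\pi(1)\neq 1$ (via $\mathbb{P}_{11}\perp\mathbb{P}_{22}=\mathbb{P}_{33}$) supply the non-conjugacy across classes that the paper leaves implicit.
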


 \begin{proof} Evidently, the partitions $\xi_2  $ of $ \mathbf{P}_3 $ and $\xi_3  $ of $ \Delta_3$ are invariant only under  the permutation $\pi=\begin{pmatrix}
x^{(0)} & y^{(0)} & z^{(0)}\\
x^{(0)} & z^{(0)} & y^{(0)}
\end{pmatrix}$. Therefore,  the given operators  should be classified with respect to the remuneration of their coordinates. Consequently,  we have to perform $ \pi V \pi^{-1} $ transformation on all the operators.\\

Starting with $ V_{1} $ as the first operator, we obtain\\

$V_{1}\left( \pi^{-1}(x^{(0)},y^{(0)},z^{(0)})\right) = V_{1}\left(x^{(0)},z^{(0)},y^{(0)}\right) =(  \alpha (x^{(0)})^2+2x^{(0)}y^{(0)}+2x^{(0)}z^{(0)},\beta(x^{(0)})^2,$\\

$(y^{(0)})^2+(z^{(0)})^2+2y^{(0)}z^{(0)}) . $
Thus,  
$$ \pi V_{1} \pi^{-1} =\left(\alpha (x^{(0)})^2+2x^{(0)}y^{(0)}+2x^{(0)}z^{(0)},(y^{(0)})^2+(z^{(0)})^2+2y^{(0)}z^{(0)} , (a-1)\beta (x^{(0)})^2 \right) = V_{8} .$$

We can derive the other classes by following the same procedure. This process completes the proof.

 \end{proof}
\section{Dynamics of classes $G_{3}$ and $ G_{9}$}

This section explores the dynamics of   $\xi^{(as)}$-QSO  $V_{3,15}:S^{2}\rightarrow S^{2} $  
selected from $G_{3}  $ and $G_{9}  $. To begin,  $V_{3}$ is rewritten as follows:

 \begin{equation}\label{600} 
V_{3}:=\left\{
\begin{array}{l}
 x'=  \alpha  (x^{(0)})^{2}+2y^{(0)}z^{(0)}   \\
y'=\left( 1-\alpha\right)  (x^{(0)})^{2}+2x^{(0)}\left( 1-x^{(0)}\right) \\
z'= (z^{(0)})^{2}+ (y^{(0)})^{2}
\end{array} \right.
\end{equation}
The operator $V_{3}$ can be redrafted  as a convex combination $ V_{3}= \alpha  W_{1}+\left( 1-\alpha\right) W_{2} $,\\
   where \\
 \begin{equation}\label{600}
W_{1}:=\left\{
\begin{array}{l}
x'= (x^{(0)})^{2}+2y^{(0)}z^{(0)}  \\
y'=2x^{(0)}\left( 1-x^{(0)}\right)\\
z'= (z^{(0)})^{2}+ (y^{(0)})^{2}
\end{array} \right.
\end{equation}\\
and
 \begin{equation}\label{601}
W_{2}:=\left\{
\begin{array}{l}
x'=2y^{(0)}z^{(0)}  \\
y'=2x^{(0)}-(x^{(0)})^{2}\\
z'=(z^{(0)})^{2}+ (y^{(0)})^{2}
\end{array} \right.
\end{equation} 
 
\begin{theorem}\label{602}  Let   $  W_{1}:S^{2}\rightarrow S^{2} $  be a $\xi^{(as)}$-QSO  given by \eqref{600}and  $ x_{1}^{(0)}=(x^{(0)},y^{(0)},z^{(0)})\notin Fix(W_{1})$  be any  an initial point in the simplex $ S^{2} $. Then, the following    statements are true:
\begin{enumerate}
\item[(i)]$ Fix(W_{1})=\left\{e_{1},e_{3},(\dfrac{3-\sqrt{3}}{4},\dfrac{\sqrt{3}}{4},\frac{1}{4})\right\rbrace  $,
\item[(ii)] $ \omega _{W_{1}}(x_{1}^{(0)} )=\left\lbrace (\dfrac{3-\sqrt{3}}{4},\dfrac{\sqrt{3}}{4},\frac{1}{4})\right\rbrace  $.
\end{enumerate}
\end{theorem}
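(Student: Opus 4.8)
For part (i) I would solve the fixed-point system $x = x^2+2yz$, $y = 2x(1-x)$, $z = y^2+z^2$ directly, using the simplex relation to eliminate variables. The middle equation already expresses $y = 2x(1-x)$ as a function of $x$ alone, and then $z = 1-x-y = (1-x)(1-2x)$. Substituting these into the third equation written as $z(1-z)=y^2$, and using $1-z = x(3-2x)$, collapses everything to the single identity $x(3-2x)(1-x)(1-2x) = 4x^2(1-x)^2$. The factors $x=0$ and $x=1$ return the vertices $e_3$ and $e_1$; dividing by $x(1-x)$ leaves the quadratic $8x^2-12x+3=0$ with roots $\tfrac{3\pm\sqrt3}{4}$. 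Only $x=\tfrac{3-\sqrt3}{4}$ lies in $[0,1]$ (and keeps $z\ge 0$), giving the interior fixed point $\bigl(\tfrac{3-\sqrt3}{4},\tfrac{\sqrt3}{4},\tfrac14\bigr)$. This is the clean, self-contained part.

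For part (ii) I would first cut the dimension down. Writing $x=1-y-z$, the operator becomes the planar map $G(y,z)=\bigl(2(y+z)(1-y-z),\,y^2+z^2\bigr)$ on the triangle, with unique interior fixed point $q=(\tfrac{\sqrt3}{4},\tfrac14)$ corresponding to the interior fixed point above. Two structural facts come first: the open interior is forward-invariant (each image coordinate is strictly positive there), and every boundary point other than the three vertices is eventually carried into the interior (each open edge reaches the interior in at most two steps), so it suffices to treat interior initial data. Next I would linearise. At $q$ the Jacobian is $\left(\begin{smallmatrix} 1-\sqrt3 & 1-\sqrt3 \\ \sqrt3/2 & 1/2 \end{smallmatrix}\right)$, with trace $\tfrac32-\sqrt3$ and determinant $2-\sqrt3$; its eigenvalues are complex with modulus $\sqrt{2-\sqrt3}<1$, so $q$ is an attracting focus. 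At the vertices the linearisation of $G$ has an eigenvalue of modulus $2$ (eigenvalues $0,2$ at $e_1$ and $\pm2$ at $e_3$), so both vertices are non-attracting and no interior trajectory can accumulate there. This secures local attraction to $q$ and repulsion from the corners.

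The real obstacle is upgrading local attraction to \emph{global} convergence. Because the eigenvalues at $q$ are genuinely complex, the orbit spirals and no single coordinate is monotone — this is already visible in the first few iterates — so a coordinatewise monotonicity argument is unavailable. Moreover, stability of a focus together with repelling corners does \emph{not} by itself rule out an invariant closed curve encircling $q$ (a discrete limit cycle), and the area factor $\det DG = 4\,(1-2(y+z))\,(z-y)$ is not uniformly of modulus below one, so a naive Jacobian-determinant contraction argument fails as well. My plan is therefore to build a strict Lyapunov function $L$ on the interior with a strict minimum at $q$ and $L(G(y,z))<L(y,z)$ for all $(y,z)\neq q$: one would start from the quadratic form adapted to the eigenbasis of $DG(q)$, which decreases to leading order by the factor $2-\sqrt3$, and correct it so that the descent inequality survives throughout the interior.

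Given such an $L$, LaSalle's invariance principle together with compactness of the simplex forces every interior $\omega$-limit set into the fixed-point set, i.e. into $\{q\}$; combined with the corner analysis this yields $\omega_{W_1}(x_1^{(0)})=\{q\}$ for every non-fixed interior start, which is statement (ii). The step I expect to be hardest is verifying the global descent inequality for an explicit $L$ (in particular excluding an invariant curve around the focus); if a clean global $L$ proves elusive, I would fall back on showing by direct estimates that every interior orbit enters a neighbourhood of $q$ on which the adapted norm is a genuine contraction, and then iterate.
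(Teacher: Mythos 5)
Your part (i) is complete and correct: eliminating $y$ and $z$ via $y=2x(1-x)$, $z=(1-x)(1-2x)$ and reducing to $8x^{2}-12x+3=0$ is a clean route (the paper instead derives $y(1-4z)=0$ and splits into the cases $y=0$ and $z=\tfrac{1}{4}$), and both arguments land on the same three fixed points.

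Part (ii), however, has a genuine gap: the whole argument hinges on a strict Lyapunov function $L$ satisfying $L(G(p))<L(p)$ off the fixed point, and you never produce one. You correctly identify why this is the crux --- the eigenvalues of $DG$ at $q$ are complex with modulus $\sqrt{2-\sqrt{3}}$, so orbits spiral, no coordinate is eventually monotone, and local attraction plus non-attracting vertices does not by itself exclude an invariant closed curve encircling $q$ --- but identifying the obstacle is not the same as overcoming it. As written, your proposal establishes only that $q$ is a local attractor and that the vertices are not accumulation points of interior orbits; the global convergence assertion, which is the actual content of statement (ii), remains unproved, and your stated fallback (direct estimates forcing entry into a contraction neighbourhood) is likewise not carried out. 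For comparison, the paper takes a different route: it partitions the simplex into regions $A_{1},\dots,A_{8}$, argues that every trajectory eventually enters $A_{8}$, and then claims that on $A_{8}$ the sequence $y^{(n)}$ is increasing while $x^{(n)}+y^{(n)}$ is decreasing, forcing convergence. Your own spectral observation at $q$ shows that such coordinatewise monotonicity cannot persist all the way to the limit, so the paper's closing step is itself questionable --- but that does not repair your argument; it only means that neither text, as it stands, contains a complete proof of (ii).
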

\begin{proof}
Let $  W_{1}:S^{2}\rightarrow S^{2} $ be a $\xi^{(as)}$-QSO  given by \eqref{600},   $ x_{1}^{(0)}\notin Fix( W_{1})$  be any   initial point in  simplex $ S^{2} $, and   $ \left\lbrace W_{1}^{(n)} \right\rbrace _{n=1}^{\infty}  $
 be a trajectory of $ W_{1} $  starting from  point $x_{1}^{(0)}  $.\\
 
(i) The set of  fixed points of $ W_{1} $ are obtained by finding the solution for the   following  system of equations:
\begin{equation}\label{602ytut}
\left\{
\begin{array}{l}
x=x^{2}+2yz \\
y=2x\left( 1-x\right)\\
z=y^{2}+z^{2}
\end{array} \right.
\end{equation}

 By depending on the first equation in system \eqref{602ytut}, we derive $ x-x^{2}=2yz $. Subsequently, the last equation is multiplied by 2, and the new equation is substituted into the second equation in system \eqref{602ytut}. We obtain  $ y(1-4z)=0 $ and  find $ y=0 $ or $ z=\frac{1}{4} $. If $ y=0 $, then $ x=0$ or $ x=1$ can be easily found; hence, the fixed points are $ e_{1}=(1,0,0) $ and $ e_{3}=(0,0,1) $. If $ z=\frac{1}{4} $,  then $ y=\dfrac{\sqrt{3}}{4} $ and  $x=\dfrac{3-\sqrt{3}}{4}$ can be found by using the first and third equation  in system  \eqref{602ytut}. Therefore, the fixed point is $ \left( \dfrac{3-\sqrt{3}}{4},\dfrac{\sqrt{3}}{4},\frac{1}{4}\right) $.\\
\\
\\
(ii) To investigate the dynamics of $ W_{1} $,   the following regions are introduced:\\
\begin{eqnarray*}
A_1:&=&\{x_{1}^{(0)} \in S^{2}:  0\leq x^{(0)},y^{(0)},z^{(0)}\leq \frac{1}{2} \},\\
A_2:&=&\{x_{1}^{(0)} \in S^{2}:   \frac{1}{2}<  x^{(0)}<1 \},\\
A_3:&=&\{x_{1}^{(0)} \in S^{2}:   \frac{1}{2}<  y^{(0)}<1  \},\\
A_4:&=&\{x_{1}^{(0)} \in S^{2}:   \frac{1}{2}< z^{(0)}<1  \},\\
A_5:&=&\{x_{1}^{(0)} \in S^{2}:  0< z^{(0)} \leq x^{(0)}< y^{(0)}< \frac{1}{2} \},\\
A_6:&=&\{x_{1}^{(0)} \in S^{2}:   0\leq y^{(0)} \leq z^{(0)}\leq x^{(0)}\leq  \frac{1}{2}\},\\
A_7:&=&\{x_{1}^{(0)} \in S^{2}:   0\leq x^{(0)} \leq y^{(0)}\leq z^{(0)}\leq  \frac{1}{2}\},\\
A_8:&=&\{x_{1}^{(0)} \in S^{2}:  0< z^{(0)} \leq x^{(0)}\leq \dfrac{1}{3},\quad \dfrac{1}{3} < y^{(0)}\leq \frac{1}{2} \}.
\end{eqnarray*}
  
 Subsequently,  the behavior of $ W_{1} $ across all the aforementioned regions is explored. Then,  the behavior of $W_{1}  $ will be described. To achieve this objective, the following results should be shown:\\

(1) Let $x_{1}^{(0)}\in A_1   $. Then,  $0\leq x^{(0)},y^{(0)},z^{(0)}\leq \frac{1}{2} $. Evidently $-1 \leq 3x^{(0)} - 1 \leq \frac{1}{2}  $ by squaring and adding $ -3(y^{(0)}-z^{(0)})^{2} $. The last inequality becomes   $0\leq (3x^{(0)} - 1)^{2}-3(y^{(0)}-z^{(0)})^{2} \leq 1  $, and $9(x^{(0)})^{2}-6x^{(0)}+1-3(y^{(0)}-z^{(0)})^{2} \leq 1   $ is obtained.    Dividing the previous inequality  by three after adding two to   both parts  of the inequality will derive  $$3(x^{(0)})^{2} -2x^{(0)}+1-(y^{(0)}-z^{(0)})^{2}\leq 1 .$$ Therefore,  $$2(x^{(0)})^{2} +(y^{(0)}+z^{(0)})^{2}-(y^{(0)}-z^{(0)})^{2}\leq 1 .$$Then,  $2(x^{(0)})^{2}+4y^{(0)}z^{(0)}\leq 1  $,  which implies that $ x'\leq \frac{1}{2} $.  To show that  $y'\leq \frac{1}{2}  $, one can check that $ y'\leq \frac{1}{2}$        $ \forall   x_{1}^{(0)}  $. Evidently see that $ 0\leq (y^{(0)})^{2} ,(z^{(0)})^{2}\leq  \frac{1}{4}   $,  which  implies that $z'\leq \frac{1}{2}  $. Hence, $ A_1 $ is an invariant region.\\

(2) The second coordinate  of $W_{1}  $ is less than $ \frac{1}{2} $ at any initial point $ x_{1}^{(0)}  $, thereby indicating that $ A_3 $ is not an invariant region. Then, we intend to show that $  A_2 $ is also not an invariant region. To achieve this objective,  we suppose that $  A_2 $ is an invariant region, which indicates that $y'\leq x'  $ and $z'\leq x' $. However,  $$x'= (x^{(0)})^{2}+2y^{(0)}z^{(0)} \leq (x^{(0)})^{2}+(y^{(0)})^{2}+(z^{(0)})^{2}\leq x^{(0)}(x^{(0)}+y^{(0)}+z^{(0)})=x^{(0)}. $$ Then $ \dfrac{x'}{x^{(0)}}<1 $, which implies that the first coordinate is a decreasing bounded sequence that converges to zero, thereby contradicting our assumption. Hence, if $ x_{1}^{(0)}\in  A_2 \cup A_3  $, then $ n_{k_{1}},  n_{k_{2}}  \in \mathbb{N} $, such that the sequences $ x^{(n_{k_{1}})} $ and $ y^{(n_{k_{2}})} $ tend toward the invariant region $ A_1 $.\\

(3) Thereafter, we intend to show that if $ x_{1}^{(0)}\in  A_4 $, then  $ n_{k} \in \mathbb{N} $, such that the sequence $ z^{(n_{k})} $  returns to  region $ A_1 $. To achieve this objective,  $  A_{4}$ is supposed as an invariant region; hence, $z'\geq y'+x'  $ and $ x' ,y' \leq \frac{1}{2}  $. Evidently $x' \leq y'   $. By using the last inequality and the first coordinate of $W_{1}  $,  we obtain $y' \geq 2y^{(0)}z^{(0)}  $. That is, $ z' \leq \frac{1}{2} $, which repudiates our assumption. Hence,  region $A_4  $ is not  invariant.\\   

(4) Given that $ y',x'\leq \dfrac{1}{2} $, we can easily conclude    $x'\leq y'  $ thereby indicating $A_6  $ is impossible to be  invariant region. Subsequently, we intend to verify whether   $A_7  $ is an  invariant region. Let $x_{1}^{(0)} \in A_7  $. Then,

 $$\quad z'=(z^{(0)})^{2}+(y^{(0)})^{2}\leq (x^{(0)})^{2}+(y^{(0)})^{2}+(z^{(0)})^{2}$$  

  $\quad  \quad \quad  \quad \quad  \quad \quad  \quad\quad  \quad\quad  \quad\quad  \quad \quad  \quad \quad  \quad\quad  $ 
 $\leq  x^{(0)}z^{(0)}+y^{(0)}z^{(0)}+(z^{(0)})^{2}$\\ 
 
$\quad  \quad \quad  \quad \quad  \quad \quad  \quad\quad  \quad\quad  \quad\quad  \quad \quad  \quad \quad  \quad\quad  $  $=z^{(0)}(x^{(0)}+y^{(0)}+z^{(0)})=z^{(0)}$.
\\

We determine that $ \frac{z'}{z^{(0)}}< 1 $, which indicates that  $z^{(n)}  $ is a decreasing bounded sequence, i.e., $z^{(n)}  $ converges to the fixed point zero,  thereby negating our presumption. Thus,   region  $A_7  $ is not  invariant. Then, we  consider  a new sequence $x'+z'= 2(x^{(0)})^{2}-2x^{(0)}+1  $. The new sequence has a minimum value of $ \dfrac{1}{2} $, which indicates that all coordinates are  greater than zero and less than $ \dfrac{1}{2} $. Hence, if $ x_{1}^{(0)}\in  A_6\cup A_7 \cup A_1  $, then,  $ n_{k_{1}}, n_{k_{2}}, n_{k_{3}} \in \mathbb{N} $, such that the sequences $ x^{(n_{k_{1}})} $, $ y^{(n_{k_{2}})} $, and $ z^{(n_{k_{3}})} $  return to   invariant region $ A_5 $.\\

(5) Let $ x^{(0)}\leq\frac{1}{3} $.  Whether the maximum  value of the first  coordinate  $ x'=(x^{(0)})^{2}+2y^{(0)}(1-x^{(0)}-y^{(0)})$ occurs when $\left( \frac{1}{3},\frac{1}{3},\frac{1}{3}\right) $ can be easily checked. Thus, $ x^{(n)}\leq \frac{1}{3} $ and $ z^{(n)}\leq \frac{1}{3} $.  Given that all coordinates are equal to one, we conclude that  $ y^{(n)}\geq \frac{1}{3} $. Therefore, if $x_{1}^{(0)} \in A_{5}$, then $ n_{k} \in \mathbb{N} $, such that $W^{(n_{k})}_{1}  $ returns to $ A_{8} $.  Hence, $ A_{8} $ is an invariant region.   \\

We have proven that if $x_{1}^{(0)} \in A_{i}$, $ i \in \lbrace1,\dots,7\rbrace $, then the trajectory $ \lbrace W_{1}^{(n)} \rbrace _{n=1}^{\infty}  $  goes to invariant region $ A_{8} $. Thus, exploring the dynamics of $W_{1}  $ over region $ A_{8} $ is adequate.
Evidently, $ y^{(n)} $ is a bounded increasing   sequence. Given that $y^{(n)}+x^{(n)} $ is a bounded decreasing  sequence and $x^{(n)}=y^{(n)}-y^{(n)}+x^{(n)}$, we conclude  that  $  x^{(n)} $ is a decreasing bounded  sequence that converges to $\dfrac{3-\sqrt{3}}{4}  $. Thus, we have $ y^{(n)} $ converging to $ \dfrac{\sqrt{3}}{4} $. Therefore, $\omega _{W_{1}}(x_{1}^{(0)})= \left\lbrace (\dfrac{3-\sqrt{3}}{4},\dfrac{\sqrt{3}}{4},\frac{1}{4})\right\rbrace$, which is the desired conclusion.
\end{proof}\\ 
\begin{theorem}\label{603} Let   $  W_{2}:S^{2}\rightarrow S^{2} $  be a $\xi^{(as)}$-QSO  given by \eqref{601}and  $ x_{1}^{(0)}=(x^{(0)},y^{(0)},z^{(0)})\notin Fix(W_{2})$  be any   initial point in  simplex $ S^{2} $. Then, the following    statements are true:\\
\begin{enumerate}
\item[(i)]$ Fix(W_{2})=\left\{e_{3},(x^{\centerdot},y^{\centerdot},z^{\centerdot})\right\rbrace  $,\\

where $x^{\centerdot}=\frac{-1}{6}\sqrt[3]{t}-\frac{8}{3\sqrt[3]{t}+\frac{5}{3}}$, $y^{\centerdot}=\frac{-1}{6}\frac{3\sqrt{17}\sqrt[3]{t}+2\sqrt[3]{t^{2}}-24\sqrt{17}-5\sqrt[3]{t}-88}{\sqrt[3]{t^{2}}}  $, 
 $z^{\centerdot}=\frac{-1}{6}\frac{2\sqrt[3]{t^{2}}-3\sqrt{17}\sqrt[3]{t}-11\sqrt[3]{t},+6\sqrt{17}-10}{\sqrt[3]{t^{2}}}  $,
 and $ t=\left( 98+18\sqrt{17}\right)  $.
 
\item[(ii)] \begin{equation}\label{fssddsa}
Per_{2}(W_{2})=\left\{
\begin{array}{l}
\begin{split}
 e_{3},(0,y^{\circ},1-y^{\circ}) \end{split}, if \ \ x^{(0)}=0  \ \\
 \\
\begin{split}
e_{3},(x^{\circ},0,1-x^{\circ})
\end{split}, if \ \ y^{(0)}=0  \\ 
\end{array} \right. \\
\end{equation}

 where $y^{\circ}=\dfrac{1}{6}(1+3\sqrt{57})^{\frac{1}{3}} -\frac{4}{3(1+3\sqrt{57})^{\frac{1}{3}}}+\frac{2}{3}$, $x^{\circ}=\dfrac{-1}{6}(46+6\sqrt{57})^{\frac{1}{3}}-\frac{2}{3(46+6\sqrt{57})^{\frac{1}{3}}}+\frac{4}{3}  $.
 
\item[(iii)] \begin{equation}\label{fssddsa}
\omega _{w_{2}}(x_{1}^{(0)} )=\left\{
\begin{array}{l}
\begin{split}
 \quad\quad (x^{\centerdot},y^{\centerdot},z^{\centerdot}) \end{split}\ \ \quad\quad\quad\quad \quad\quad, if \ \ x_{1}^{(0)}\in int\left( S^{2}\right)   \ \\
 \\
\begin{split}
(x^{\circ},0,1-x^{\circ}),(0,y^{\circ},1-y^{\circ})
\end{split}\ \ , if \ x_{1}^{(0)} \in \overline{int\left( S^{2}\right)}   \\
\\
 \begin{split}
\quad\quad \quad e_{3} \end{split}\ \ \quad \quad\quad\quad\quad\textbf{}\quad\quad \quad, if \ \ x^{(0)}, y^{(0)}=1   \ \\
 \\ 
\end{array} \right.
\end{equation}

\end{enumerate}
\end{theorem}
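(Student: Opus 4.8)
The plan is to exploit the simplex relation $z=1-x-y$ throughout, reducing $W_2$ to the planar map $x'=2y(1-x-y)$, $y'=x(2-x)$ on the triangle $\{x,y\ge 0,\ x+y\le 1\}$, and to treat the three parts in turn. For (i) I would first check $e_3$ directly. For the remaining fixed points I substitute the second equation $y=x(2-x)$ into $x=2y(1-x-y)$; cancelling the factor $x$ (which reproduces $e_3$) collapses the system to the single cubic $2x^3-10x^2+14x-3=0$. The shift $x=w+\tfrac53$ removes the quadratic term, giving $w^3-\tfrac43 w+\tfrac{49}{54}=0$, whose Cardano radical equals $\tfrac{\sqrt{17}}{12}$ and hence produces the stated $t=98+18\sqrt{17}$; back-substitution recovers $x^{\centerdot}$, and the fixed-point equations then give $y^{\centerdot}$ and $z^{\centerdot}$. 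I would also confirm this root lies in $(0,1)$, so the point is genuinely interior.

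For (ii) the structural key is that $W_2$ interchanges the edges $\{x=0\}$ and $\{y=0\}$: $(0,y,1-y)\mapsto(2y(1-y),0,\ast)$ and $(x,0,1-x)\mapsto(0,x(2-x),\ast)$. Thus a genuine $2$-cycle has one point on each edge, characterised by $x^{\circ}=2y^{\circ}(1-y^{\circ})$ and $y^{\circ}=x^{\circ}(2-x^{\circ})$. Eliminating $x^{\circ}$ yields $4(y^{\circ})^3-8(y^{\circ})^2+8y^{\circ}-3=0$ and eliminating $y^{\circ}$ yields $2(x^{\circ})^3-8(x^{\circ})^2+10x^{\circ}-3=0$; both are solved by Cardano via the shifts $y=v+\tfrac23$ and $x=u+\tfrac43$, each producing the radical $\sqrt{57}$ and the stated expressions. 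The degenerate roots corresponding to $e_3$ and to the vertices are discarded, leaving the true period-$2$ point.

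For (iii) I would split on the location of $x_1^{(0)}$. The vertex case $x^{(0)}=1$ or $y^{(0)}=1$ is immediate: $e_1\mapsto e_2\mapsto e_3$, so the orbit reaches the fixed point $e_3$. For boundary initial points (other than $e_1,e_2$) I use the edge-interchange from (ii): since $\{z=0\}$ is carried into $\{x=0\}$ after one step, every boundary orbit is eventually trapped in the invariant pair $\{x=0\}\cup\{y=0\}$, on which $W_2^{2}$ acts as a one-dimensional self-map of each edge whose only fixed point in the open edge is $y^{\circ}$ (resp.\ $x^{\circ}$); I would show this point is attracting by verifying $|(W_2^{2})'|<1$ there and establishing monotone convergence along the edge, so that $\omega_{W_2}(x_1^{(0)})$ is the $2$-cycle. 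For interior initial points the target is $(x^{\centerdot},y^{\centerdot},z^{\centerdot})$: I would first prove the open simplex is invariant (each update is strictly positive when $x,y,z>0$), then imitate the region-decomposition method used for $W_1$ in Theorem \ref{602}, partitioning $\mathrm{int}(S^2)$ into finitely many cells on which individual coordinates behave monotonically, showing each cell is eventually mapped into a trapping neighbourhood of the interior fixed point, and concluding with a local contraction (Jacobian spectral radius $<1$) argument.

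The hard part is the interior case of (iii). In contrast to $W_1$, here the boundary simultaneously carries the fixed point $e_3$ and an attracting $2$-cycle, so the crux is to rule out interior trajectories drifting toward the edges; with no evident Lyapunov function or single globally monotone coordinate available, the argument will rest on designing the correct invariant regions and monotone quantities, which is the delicate, case-heavy heart of the proof.
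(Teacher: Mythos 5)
Your parts (i) and (ii) reproduce the paper's computations correctly: the quartic $3x-14x^2+10x^3-2x^4=0$ of the paper is exactly your cubic $2x^3-10x^2+14x-3=0$ after cancelling $x$, your depressed cubic $w^3-\tfrac43w+\tfrac{49}{54}=0$ does yield $t=98+18\sqrt{17}$, and your edge equations $4(y^{\circ})^3-8(y^{\circ})^2+8y^{\circ}-3=0$ and $2(x^{\circ})^3-8(x^{\circ})^2+10x^{\circ}-3=0$ agree with the paper's \eqref{602}. The boundary part of (iii) also matches the paper's treatment (one-dimensional maps $\nu,\vartheta$ on the edges, monotone convergence of even iterates, plus the observation that $\{z=0\}$ maps into $\{x=0\}$). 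However, there are two genuine gaps. First, in (ii) your ``thus a genuine $2$-cycle has one point on each edge'' does not follow from the edge-interchange alone: you must separately exclude periodic points in the interior. The paper does this by exhibiting monotone quantities (see below); without that step your characterization of $Per_2(W_2)$ is incomplete, and the same exclusion is needed later to justify the interior $\omega$-limit.

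Second, and more seriously, the interior case of (iii) — which you yourself flag as ``the delicate, case-heavy heart of the proof'' — is left unproved: you state that the argument ``will rest on designing the correct invariant regions and monotone quantities'' but do not produce them, and a local Jacobian contraction at $(x^{\centerdot},y^{\centerdot},z^{\centerdot})$ would in any case only give local, not global, attraction. The missing idea in the paper is concrete and simpler than the cell decomposition you envisage: from $y'=x^{(0)}(2-x^{(0)})\ge x^{(0)}$ and $x'=2y^{(0)}z^{(0)}\le (y^{(0)})^2+(z^{(0)})^2=z'$ one gets $x^{(n)}\le y^{(n)},z^{(n)}$ after one step; then $x^{(n)}+y^{(n)}$ is decreasing while $y^{(n)}$ is increasing, so $x^{(n)}$ is decreasing, both coordinates converge, hence so does $z^{(n)}$, and the limit must be a fixed point; the invariant regions $\ell_1=\{0<x,y,z\le\tfrac12\}$ and $\ell_2=\{0<x\le z\le y\le\tfrac12\}$ (together with the contradiction obtained from assuming $z^{(n)}\downarrow 0$) rule out convergence to $e_3$, leaving $(x^{\centerdot},y^{\centerdot},z^{\centerdot})$. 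Without supplying these monotone quantities (or an equivalent Lyapunov-type argument), your proposal does not establish part (iii) for interior initial points.
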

\begin{proof}Let $  W_{2}:S^{2}\rightarrow S^{2} $ be a $\xi^{(as)}$-QSO  given by \eqref{601}, $ x_{1}^{(0)}\notin Fix( W_{2})$  be any   initial point in $ S^{2} $, and   $ \left\lbrace W_{2}^{(n)} \right\rbrace _{n=1}^{\infty}  $
 be a trajectory of $ W_{2} $  starting from  point $x_{1}^{(0)} $.\\

(i) The set of  fixed points of $ W_{2} $ is obtained by finding the solution for the   following system of equations:\\
 \begin{equation}\label{604}
\left\{
\begin{array}{l}
x= 2yz \\
y=2x-x^{2}\\
z=z^{2}+y^{2}
\end{array} \right.
\end{equation}\\
On the basis of the first equation in system \eqref{604}, we have $ z=\frac{x}{2y} $. By using $ z=1-y-x $ and the second equation in system \eqref{604}, we obtain $ 3x-14x^{2}+10x^{3}-2x^{4}=0 $. Thus, the roots of the previous  equation are $ \left\lbrace0,x^{\centerdot} \right\rbrace  $. By compensating for the values of $ x $, namely,   $ x=0 $ and $ x=x^{\centerdot} $ in the second equation in system \eqref{604}, we obtain $ y=0$ and $z=1$ or $y=y^{\centerdot} $ and $z=z^{\centerdot} $. Therefore, the fixed points of $W_{2} $ are $ e_{3}=(0,0,1)$ and $(x^{\centerdot},y^{\centerdot},z^{\centerdot})$.
\\
\\
(ii) To find $ 2-$periodic points of $ W_{2} $, we should prove that $ W_{2} $ has no any order periodic points in  set  $ S^{2}\setminus L_{1}\cup L_{2} $, where $ L_{1}=\lbrace x_{1}^{(0)} \in S^{2}: x^{(0)}=0  \rbrace  $ and $ L_{2}=\lbrace x_{1}^{(0)} \in S^{2}: y^{(0)}=0  \rbrace  $. Evidently, the second coordinate of $ W_{2} $ increases along the iteration of $ W_{2} $ in  set $S^{2}\setminus L_{2} $. Consider a new sequence  $ x'+y'=2x^{(0)}-(x^{(0)})^{2}+2y^{(0)}(1-x^{(0)}-y^{(0)}) $. Whether $x'+y'  $ is a decreasing sequence can be easily checked, thereby indicating  that  sequence $x^{(n)} $ is decreasing because $x^{(n)}=y^{(n)}-y^{(n)}+x^{(n)} $. Thus, the first coordinate of $ W_{2} $ decreases along the iteration of $ W_{2} $ in  set $\ S^{2}\setminus L_{1} $, which indicates that $ W_{2} $ has no any order $ 2- $periodic points in  set $ S^{2}\setminus  L_{1}\cup L_{2} $. Therefore, finding $ 2- $periodic points of $ W_{2} $ in  $ L_{1}\cup L_{2} $ is sufficient. To find  $ 2-$periodic points, the succeeding  system of equations should be solved:
\begin{equation}\label{602}
\left\{
\begin{array}{l}
x=2(2x-x^{2})(y^{2}+z^{2}) \\
y=4yz-4y^{2}z^{2}\\
z=(2x-x^{2})(y^{2}+z^{2})^{2}
\end{array} \right.
\end{equation} \\
 First, we  start when $ x=0 $. Then, we find the solution for $ y=4y-8y^{2}+8y^{3}-4y^{4} $. We obtain the following solution: $ y=0 $ or $ y=y^{\circ} $. If  $ y=0 $, then $ z=1 $.   If $ y=y^{\circ} $, then $ z=1-y^{\circ} $. Therefore, $ e_{3} $ and $ (0,y^{\circ},1-y^{\circ}) $  are $ 2- $periodic points. On the other hand, if  $ y=0 $, then  the  solutions for the following equation: $ x=2(2-x-x^{2})(1-x)^{2} $ are $ x=0 $ or $ x=x^{\circ} $. If  $ x=0 $, then $ z=0 $;  if $ x=x^{\circ} $, then $ z=1-x^{\circ} $. Therefore, $ e_{3} $ and $ (x^{\circ},0,1-x^{\circ}) $ are $ 2- $periodic points.\\
 
(iii) To investigate the dynamics of $ W_{2} $,   the following regions are introduced:\\
\begin{eqnarray*} 
 \ell_1:&=&\{x_{1}^{(0)}\in  int\left(S^{2} \right):   0	< x^{(0)},y^{(0)},z^{(0)}\leq \frac{1}{2} \};\\
 \ell_2:&=&\{x_{1}^{(0)} \in  int\left(S^{2} \right):  0	< x^{(0)} \leq z^{(0)}\leq y^{(0)}\leq \frac{1}{2} \}.
 \end{eqnarray*}\\
 
 Let $ x_{1}^{(0)}\notin Fix(W_{2})\cup Per_{2}(W_{2}) $ and $ x_{1}^{(0)} \in int\left( S^{2}\right)$ be the initial points where $int\left( S^{2}\right)=\lbrace x_{1}^{(0)} \in S^2 :x^{(0)}y^{(0)}z^{(0)}>0 \rbrace   $. Evidently,  $ y'=2x^{(0)}-(x^{(0)})^{2}\geq x^{(0)} $ and  $ x'=2y^{(0)}z^{(0)}\leq (y^{(0)})^{2}+(z^{(0)})^{2}=z'$,  which indicates  that $x^{(n)}\leq z^{(n)}  $ and
  $x^{(n)}\leq y^{(n)}  $. Subsequently, we are going to prove that $\ell_1  $ is an invariant region. To achieve this objective. we start with $ y^{(n)} $. Suppose that $ y'\geq \frac{1}{2} $ by using the  first coordinate of $ W_{2} $. Then we have $ x'=2y^{(0)}z^{(0)} $, which implies that $ x'\geq z' $. This relation is a contradiction because $x'\leq z'$. Thus, $y^{(n)}\leq\frac{1}{2} $. By performing the same process  used to prove $ y^{(n)}\leq \frac{1}{2} $, we prove that $ z'\leq \frac{1}{2} $. Suppose that $ z'\geq \frac{1}{2} $. By using  the first coordinate in $ W_{2} $, we obtain $ x'\geq y' $, which is another contradiction. Therefore, $\ell_1 $ is an invariant region. Moreover, if $ x_{1}^{(0)} \notin Fix(W_{2})\cup Per_{2}(W_{2}) $, $ x_{1}^{(0)} \in int\left( S^{2}\right)$, and $x_{1}^{(0)}\in \overline{\ell_1}  $, then $n_{k} \in \mathbb{N}$, such that $ W_{2}^{(n_{k})} $  returns  to  invariant region $\ell_1 $. Let us complete proving  that $ \ell_2 $ is an invariant region. To achieve this objective, suppose that $ y'\leq z' $, which indicates that $ z'=(z^{(0)})^{2}+(y^{(0)})^{2}\leq 2(z^{(0)})^{2} $. Then,  $ \dfrac{z'}{z^{(0)}}\leq 1 $. Therefore, $ z^{(n)} $ is a decreasing   bounded sequence. That is $ z^{(n)} $ converges to the fixed point zero. Moreover, $ y^{(n)} $ is an  increasing bounded sequence. Thus, $ y^{(n)} $ converges  to zero. Whether  $ y^{(n)} $  converges  to zero if  $ x^{(n)} $  converges  to zero can be checked. The result implies that  the limiting point for $ W_{2}$  is empty, which is a contradiction. Thus,  $   n_{k} \in \mathbb{N}$, such that  $ z^{(n_{k})} $  returns to  invariant region $ z'\leq y' $, which proves  that $ \ell_2 $ is an invariant region. Moreover, if $ x_{1}^{(0)} \in \ell_{1} $, then  $n_{k} \in \mathbb{N}$, such that $ W_{2}^{(n_{k})} $  returns  to  invariant region $\ell_2 $.\\ 
 
Accordingly, the behavior of $W_{2}  $ can be described.
As  discussed in  proof  part $ 2$ of this theorem, we determine that the first and second coordinates, namely,   $ x^{(n)} $ and $ y^{(n)} $,  are decreasing and increasing sequences respectively. Thus, $ x^{(n)} $ and $ y^{(n)} $ converge to certain fixed point. The first and second coordinates of $ W_{2} $ are converging; thus, the third coordinate also converges. Between the two fixed points, the aforementioned properties of $ W_{2} $ are only satisfied by point  $ (x^{\centerdot},y^{\centerdot},z^{\centerdot}) $. Therefore, the limiting point is  $ \omega_{ W_{2}}( x_{1}^{(0)})= (x^{\centerdot},y^{\centerdot},z^{\centerdot})   $ $ \forall x_{1}^{(0)} \in int\left( S^{2}\right) $.\\ 

To explore the behavior of $ W_{2} $ when $x_{1}^{(0)} \in \overline{int\left( S^{2}\right)}    $, where $ \overline{int\left( S^{2}\right)}=\lbrace x_{1}^{(0)} \in S^{2}: x^{(0)}y^{(0)}z^{(0)}=0 \rbrace   $, consider three cases i.e., when $ x^{(0)}=0 $, $y^{(0)}=0 $, and $  z^{(0)}=0 $. If $  x^{(0)}=0 $, then $ V^{(1)}((0,y^{(0)},z^{(0)}))=(x',0,1-x') $ and $ V^{(2)}((0,y^{(0)},z^{(0)}))=(0,y',1-y') $. By applying this process to the next iteration, we determine that  $V^{(2 n+1)}((0,y^{(0)},z^{(0)}))   =(x^{(2 n+1)},0,1-x^{(2 n+1)})  $ and $V^{(2 n)}((0,y^{(0)},z^{(0)}))   =(0,y^{(2 n)},1-y^{(2 n)})  $. That is, the behavior of $ W_{2} $ in this case will be on the $ xz- $ plane if $ n $ is an odd iteration and on the $ yz- $ plane if $ n $ is an even iteration. When the preceding process is performed when $ y^{(0)}=0 $, we find that $V^{(2 n+1)}((x^{(0)},0,z^{(0)}))   =(0,y^{(2 n+1)}, 1-y^{(2 n+1)}) $  and $  V^{(2 n)}((x^{(0)},0,z^{(0)}))=(x^{(2n)},0,1-x^{(2n)}) $. That is, the behavior of $ W_{2} $ in this case will be on the $ yz- $ plane if $ n $ is an odd iteration and on the $ xz- $ plane if $ n $ is an even iteration. Through the same   process, we determine that $V^{(2 n+1)}((x^{(0)},0,z^{(0)}))   =(0,y^{(2 n+1)}, 1-y^{(2 n+1)}) $  and $  V^{(2 n)}((x^{(0)},0,z^{(0)}))=(x^{(2n)},0,1-x^{(2n)}) $  when $ z^{(0)}=0 $, which indicates that $   n_{k} \in \mathbb{N}$, such that the behavior of $ W_{2} $ when $ z^{(0)}=0 $ case will be on the $ yz- $ plane if $ n $ is an odd iteration and on the $ xz- $ plane if $ n $ is an even iteration. Therefore, studying two cases when $  x^{(0)}=0 $ and $  y^{(0)}=0 $ are sufficient. Starting with $ x^{(0)}=0 $, consider the following function:\\
\begin{equation}\label{try604yjftju}
y^{(2)}=  \nu (y^{(0)})=4y^{(0)}-8(y^{(0)})^2+8(y^{(0)})^3-4(y^{(0)})^4,
\end{equation}  where $ y^{(0)} \in  (0,1) $. $ Fix(  \nu ) \cap (0,1)=\lbrace y^{\circ}\rbrace$ can be shown. Through simple calculations,     $  \nu \left((0,\frac{1}{2}]  \right)\subseteq  [\frac{1}{2},1) $ can be found. Thus, we conclude that $[\frac{1}{2},1) $ is sufficient to study the dynamics of $  \nu  $ at interval $ (0,1) $.\\

To study the behavior of $   \nu $,   interval $ [\frac{1}{2},1) $ is divided into three intervals as follows: $ I_{1}=[\frac{1}{2},y^{\circ}] $, $I_{2}= [y^{\circ},\frac{1}{2}+\frac{1}{2}\sqrt{\sqrt{2}-1}] $, and $I_{3}= [\frac{1}{2}+\frac{1}{2}\sqrt{\sqrt{2}-1},1)  $. Evidently,  $   \nu (  \nu (y^{(0)}))\geq y^{(0)} $ when $ y^{(0)}\in I_{1} $ and $   \nu (  \nu (y^{(0)}))\leq y^{(0)} $ when $ y^{(0)}\in I_{2} $. Therefore, two cases should be discussed separately. \\  
 \begin{enumerate}
 \item[(a)]For any $ n \in\mathbb N $,  $   \nu ^{(2n+2)}(y^{(0)}) \geq   \nu ^{(2n)}(y^{(0)})$ $\forall  y^{(0)}\in I_{1}  $ can be easily  shown. Thus,  $   \nu ^{(2n)}\left( y^{(0)}\right)  $ is an increasing bounded sequence. Furthermore, $   \nu ^{(2n)}(y^{(0)}) $  converges  to a fixed point of $   \nu ^{(2)} $. $ y^{\circ} $   is  also a fixed point of $   \nu ^{(2)}   $, and it is the only possible point of the convergence trajectory. Hence,  sequence $ y^{(2n)} $ converges to $ y^{\circ} $.  \\ \item[(b)] Similarly,   $   \nu ^{(2n+2)}(y^{(0)})\leq     \nu ^{(2n)}(y^{(0)})$ $\forall  y^{(0)}\in I_{2}  $. Thus, 
  $   \nu ^{(2n)} $ a decreasing bounded sequence.  Furthermore, $   \nu ^{(2n)}(y^{(0)}) $ converges to a fixed point of $   \nu ^{(2)} $ .  $ y^{\circ} $   is  also a fixed point of $   \nu ^{(2)}   $, and it is the only possible point of the convergence trajectory. Hence,  sequence $ y^{(2n)} $ converges to $ y^{\circ} $. 
\end{enumerate}
To explore the behavior of $  \nu   $, when $ y^{(0)}\in I_{3} $,  the following claim is required:\\
\begin{claim}
Let $ y^{(0)}\in I_{3} $ . Then, $ n_{k}\in\mathbb N $, such that $   \nu ^{( n_{k})} \in I_{1}\cup I_{2} $.
\end{claim}
\begin{proof}
Let  $ y^{(0)}\in I_{3} $. Suppose that the interval $ I_{3} $ is an invariant interval, which indicates that $ y^{(n)}\in I_{3} $ for any $ n\in\mathbb N $. Evidently, $   \nu  ^{(n+1)}(y^{(0)})\leq    \nu  ^{(n)}(y^{(0)}) $, which results in  $   \nu ^{(n)} $ being a  decreasing  bounded sequence and converging to a fixed point of $  \nu   $. However, $ Fix(  \nu  )\cap I_{3} = \o $, which is a contradiction. Hence,  $ n_{k}\in\mathbb N $,  such that $   \nu ^{( n_{k})} \in I_{1}\cup I_{2} $. 
\end{proof}\\
 accordance with the claim, $ y^{(2n)} $ will go to $   I_{1}\cup I_{2}  $ after several iterations . Thus,  sequence $ (0,y^{(2n)},z^{(2n)}) $ converges to $ (0,y^{\circ},1-y^{\circ}) $ whenever $ x^{(0)}=0 $. 
 
Let $ y^{(0)}=0 $ and  consider the following function:\\
\begin{equation}\label{tryew564604yjftju}
x^{(2)}= \vartheta(x^{(0)})=4x^{(0)}-10(x^{(0)})^2+8(x^{(0)})^3-2(x^{(0)})^4,
\end{equation}  where $ x^{(0)} \in  (0,1)$.  $ Fix(\vartheta) \cap (0,1)=\lbrace x^{\circ}\rbrace$ can be easily  shown. Through simple calculations, we determine   $\vartheta\left([0,1-\frac{1}{2}\sqrt{2}]  \right)\subseteq  [1-\frac{1}{2}\sqrt{2},1) $ and conclude that $ [1-\frac{1}{2}\sqrt{2},1) $ is sufficient to study the dynamics of $\vartheta  $  on $ (0,1) $.\\ 

To study the behavior of $ \vartheta$,  invariant interval $ [1-\frac{1}{2}\sqrt{2},1) $ is divided into three intervals as follows: $ I_{1}=[1-\frac{1}{2}\sqrt{2},x^{\circ}] $,  $I_{2}=[ x^{\circ},\frac{1}{2}]  $, and $I_{3}= [ \frac{1}{2},1 )   $. Thus, we have  two separate cases:
 \begin{enumerate} \item[(a)] Let $ x^{(0)}\in I_{1}  $, then $\vartheta(x^{(0)})\in I_{2}  $ and, $ \vartheta^{(2)}(x^{(0)})\in I_{1} $.  $\vartheta^{(2n+2)}(x^{(0)})\leq\vartheta^{(2n)}(x^{(0)})  $ whenever $ x^{(0)}\in I_{1}  $ can be easily checked. Therefore,  $\vartheta^{(2n)}  $ is a decreasing  bounded sequence that converges to a fixed point of $\vartheta^{(2)} $.  $ x^{\circ}  $ is  a fixed point of $\vartheta^{(2)} $ and  the only possible point of the convergence trajectory. Hence, $\vartheta^{(2)} $ converges to $ x^{\circ} $. 
\item[(b)] Similarly, let $ x^{(0)}\in I_{2}  $, then $\vartheta(x^{(0)})\in I_{1}  $ and $ \vartheta^{(2)}(x^{(0)})\in I_{2} $.  $\vartheta^{(2n+2)}(x^{(0)})\geq\vartheta^{(2n)}(x^{(0)})  $ whenever $ x^{(0)}\in I_{2}  $ can be easily checked. Therefore,  $\vartheta^{(2n)}  $ is an increasing  bounded sequence that converges to a fixed point of $\vartheta^{(2)} $.  $ x^{\circ}  $ is a fixed point of $\vartheta^{(2)}$ and the only possible point of the  convergence trajectory. Hence, $\vartheta^{(2n)} $ converges to $ x^{\circ} $. \end{enumerate}To explore the behavior of $ \vartheta $, when $ x^{(0)}\in I_{3} $,   the following claim is required:\\
 \begin{claim}
Let $ x^{(0)}\in I_{3} $ . Then,  $ n_{k}\in\mathbb N $, such that $ \vartheta^{(n_{k})} \in I_{1}\cup I_{2} $.
 \end{claim}
\begin{proof}
Let  $ x^{(0)}\in I_{3} $. Suppose that  interval $ I_{3} $ is invariant, which indicates that $ x^{(n)}\in I_{3} $ for any $ n\in\mathbb N $. Evidently,  $\vartheta ^{(n+1)}(x^{(0)})\leq  \vartheta ^{(n)}(x^{(0)}) $, which results in  sequence $\vartheta ^{(n)} $  being a decreasing  bounded and converging to a fixed point of $\vartheta  $. However, $ Fix(\vartheta )\cap I_{3} = \o $, which is contradiction. Hence,  $ n_{k}\in\mathbb N $,  such that $\vartheta^{( n_{k})} \in I_{1}\cup I_{2} $.
\end{proof}

accordance with the claim, $ x^{(n)} $ will go to $   I_{1}\cup I_{2}  $  after several iterations. Thus,  sequence $ (x^{(2n)},0,z^{(2n)}) $ converges to $ (x^{\circ},0,1-x^{\circ}) $ whenever $ y^{(0)}=0 $. In another way, if $ x^{(0)}=0 $, then \\\begin{equation}\label{fssfwrydeqqwddsa}
V^{(n)}(W_{2})=\left\{
\begin{array}{l}
\begin{split}
 (0,y^{\circ},1-y^{\circ})  \end{split}\ \ , if \ \ n=2k  \ \\
 \\
\begin{split}
\left(x^{\circ},0,1-x^{\circ}\right) 
\end{split}\ \ , if \ \ n=2k+1  \\ 
\end{array} \right. \\
\end{equation}\\ and if $ y^{(0)}=0$, then \\\begin{equation}\label{fssfwrydeqqwddsa}
V^{(n)}(W_{2})=\left\{
\begin{array}{l}
\begin{split}
 (0,y^{\circ},1-y^{\circ})  \end{split}\ \ , if \ \ n=2k+1  \ \\
 \\
\begin{split}
\left(x^{\circ},0,1-x^{\circ}\right) 
\end{split}\ \ , if \ \ n=2k  \\ 
\end{array} \right. \\
\end{equation}\\
From the preceding, we observe that if $ x^{(0)}=0 $ and $ n $ is an even, then the behavior of $W^{(2n)}_2  $ occurs in $ (0,y^{\circ},1-y^{\circ}) $,  which is equal to the behavior of $W_2  $ when $ y^{(0)}=0 $ and $ n $ is an odd iteration.  If $ y^{(0)}=0 $ and $ n $ is an even iteration, then the behavior of $W_2  $ occurs in $ (x^{\circ},0,1-x^{\circ}) $, which is equal to the behavior of $W_2  $ when $ x^{(0)}=0 $ and $ n $ is an odd iteration. Therefore, the limiting point of $W_2  $ consists of  $ (x^{\circ},0,1-x^{\circ}) $ and $ (0,y^{\circ},1-y^{\circ})$ whenever $ x^{(0)}\notin int\left(S^{2} \right)  $. If $ x^{(0)}=1 $, then the behavior of $ W_2  $ reaches  fixed point $ e_{3} $ after three iterations;  if $ y^{(0)}=1 $, then the behavior of $ W_2  $ reaches  fixed point $ e_{3} $ after one iteration. Therefore, the limiting point  in this case includes  $ e_{3} $, which is the desired conclusion.
 
\end{proof}\\Subsequently,  the behavior of operator $V_{15}  $ selected from class $ G_{9} $ is explored:\begin{equation}\label{606rtuyur0}
V_{15}:=\left\{
\begin{array}{l}
 x'=\left( 1-\alpha\right)  (x^{(0)})^{2}+2y^{(0)}z^{(0)}    \\
y'=\alpha   (x^{(0)})^{2}+2x^{(0)}\left( 1-x^{(0)}\right) \\
z'=(z^{(0)})^{2}+(y^{(0)})^{2}
\end{array} \right.
\end{equation}
The operator $V_{15}$ can be redrafted as a convex combination $ V_{15}= \left( 1-\alpha\right) W_{1}+\alpha  W_{2}$,\\
   where \\
 \begin{equation}\label{60udtr0}
W_{1}:=\left\{
\begin{array}{l}
x'=(x^{(0)})^{2}+2y^{(0)}z^{(0)} \\
y'=2x^{(0)}\left( 1-x^{(0)}\right)\\
z'=(z^{(0)})^{2}+(y^{(0)})^{2}
\end{array} \right.
\end{equation}\\
and
 \begin{equation}\label{60yfu1}
W_{2}:=\left\{
\begin{array}{l}
x'=2y^{(0)}z^{(0)}  \\
y'=2x^{(0)}-(x^{(0)})^{2}\\
z'=(z^{(0)})^{2}+(y^{(0)})^{2}
\end{array} \right.
\end{equation}
\begin{corollary}Let   $  W_{1}:S^{2}\rightarrow S^{2} $  be a $\xi^{(as)}$-QSO  given by \eqref{60udtr0}, and $ x_{1}^{(0)}=(x^{(0)},y^{(0)},z^{(0)})\notin Fix(W_{1})$  be any   initial point in  simplex $ S^{2} $. Then, the following statements are true:\\
\begin{enumerate}
\item[(i)]$ Fix(W_{1})=\left\{e_{1},e_{3},(\dfrac{3-\sqrt{3}}{4},\dfrac{\sqrt{3}}{4},\frac{1}{4})\right\rbrace  $
\item[(ii)] $ \omega _{w_{1}}(x_{1}^{(0)} )=\left\lbrace (\dfrac{3-\sqrt{3}}{4},\dfrac{\sqrt{3}}{4},\frac{1}{4})\right\rbrace  $,\\\end{enumerate} For $W_{2}  $  
 let   $  W_{2}:S^{2}\rightarrow S^{2} $  be a $\xi^{(as)}$-QSO  given by \eqref{60yfu1}and  $ x_{1}^{(0)}=(x^{(0)},y^{(0)},z^{(0)})\notin Fix(W_{2})$  be any   initial point in simplex $ S^{2} $. Then, the following statements are true:
 \begin{enumerate}
\item[(i)]$ Fix(W_{2})=\left\{e_{3},(x^{\centerdot},y^{\centerdot},z^{\centerdot})\right\rbrace  $,\\
\\
where $x^{\centerdot}=\frac{-1}{6}\sqrt[3]{t}-\frac{8}{3\sqrt[3]{t}+\frac{5}{3}}$, $y^{\centerdot}=\frac{-1}{6}\frac{3\sqrt{17}\sqrt[3]{t}+2\sqrt[3]{t^{2}}-24\sqrt{17}-5\sqrt[3]{t}-88}{\sqrt[3]{t^{2}}}  $,  $z^{\centerdot}=\frac{-1}{6}\frac{2\sqrt[3]{t^{2}}-3\sqrt{17}\sqrt[3]{t}-11\sqrt[3]{t}+6\sqrt{17}-10}{\sqrt[3]{t^{2}}}  ,$
\\
 and $ t=\left( 98+18\sqrt{17}\right)  $.
\item[(ii)] \begin{equation}\label{fssddsa}
Per_{2}(W_{2})=\left\{
\begin{array}{l}
\begin{split}
 e_{3},(0,y^{\circ},1-y^{\circ}) \end{split}\ \ , if \ \ x^{(0)}=0  \ \\
 \\
\begin{split}
e_{3},(x^{\circ},0,1-x^{\circ})
\end{split}\ \ , if \ \ y^{(0)}=0  \\ 
\end{array} \right. \\
\end{equation}
 where, $y^{\circ}=\dfrac{1}{6}(1+3\sqrt{57})^{\frac{1}{3}} -\frac{4}{3(1+3\sqrt{57})^{\frac{1}{3}}}+\frac{2}{3}$, $x^{\circ}=\dfrac{-1}{6}(46+6\sqrt{57})^{\frac{1}{3}}-\frac{2}{3(46+6\sqrt{57})^{\frac{1}{3}}}+\frac{4}{3}  $.
\item[(3)] \begin{equation}\label{fssdddswssa}
\omega _{w_{1}}(x_{1}^{(0)} )=\left\{
\begin{array}{l}
\begin{split}
 \quad\quad (x^{\centerdot},y^{\centerdot},z^{\centerdot}) \end{split}\ \ \quad\quad\quad\quad \quad\quad, if \ \ x_{1}^{(0)}\in int\left( S^{2}\right)   \ \\
 \\
\begin{split}
(x^{\circ},0,1-x^{\circ}),(0,y^{\circ},1-y^{\circ})
\end{split}\ \ , if \ x_{1}^{(0)} \notin int\left( S^{2}\right)   \\
\\
 \begin{split}
\quad\quad \quad e_{3} \end{split}\ \ \quad \quad\quad\quad\quad\textbf{}\quad\quad \quad, if \ \ x^{(0)}, y^{(0)}=1   \ \\
 \\ 
\end{array} \right.
\end{equation}

\end{enumerate}
\end{corollary}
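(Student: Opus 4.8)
The plan is to observe that this corollary requires no new dynamical analysis: the two operators named $W_1$ and $W_2$ in the statement are literally the same maps already treated in Theorem \ref{602} and Theorem \ref{603}. Comparing the coordinate expressions in \eqref{60udtr0} with those in \eqref{600} shows that they agree term by term, both sending $(x^{(0)},y^{(0)},z^{(0)})$ to $\big((x^{(0)})^2+2y^{(0)}z^{(0)},\,2x^{(0)}(1-x^{(0)}),\,(y^{(0)})^2+(z^{(0)})^2\big)$. Likewise, \eqref{60yfu1} and \eqref{601} define the identical map $(x^{(0)},y^{(0)},z^{(0)})\mapsto\big(2y^{(0)}z^{(0)},\,2x^{(0)}-(x^{(0)})^2,\,(y^{(0)})^2+(z^{(0)})^2\big)$. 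The point is simply that the alternative convex decomposition $V_{15}=(1-\alpha)W_1+\alpha W_2$ reuses exactly the same constituent operators as $V_3=\alpha W_1+(1-\alpha)W_2$; only the weights are swapped, and the weights play no role in the separate analyses of $W_1$ and $W_2$.

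Once this identification is made, the proof is immediate. For the statements about $W_1$ I would invoke Theorem \ref{602} directly: since the map is unchanged, its fixed-point set is again $\{e_1,e_3,(\frac{3-\sqrt{3}}{4},\frac{\sqrt{3}}{4},\frac{1}{4})\}$, and every trajectory from a non-fixed initial point converges to the interior fixed point. For the statements about $W_2$ I would invoke Theorem \ref{603} verbatim, transferring the description of $Fix(W_2)$, the set $Per_2(W_2)$ of $2$-periodic points supported on the edges $L_1=\{x^{(0)}=0\}$ and $L_2=\{y^{(0)}=0\}$, and the three-case description of $\omega_{W_2}(x_1^{(0)})$ according to whether the orbit starts in the interior, on the boundary, or at a vertex. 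The only cosmetic reconciliation needed is that the corollary writes the boundary case as $x_1^{(0)}\notin int(S^2)$ whereas Theorem \ref{603} writes $x_1^{(0)}\in\overline{int(S^2)}$; but by the very definition $\overline{int(S^2)}=\{x_1^{(0)}\in S^2:x^{(0)}y^{(0)}z^{(0)}=0\}$ these two conditions describe the same set, so nothing further is required.

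There is in fact no real obstacle to overcome here. The corollary is a direct restatement of the two theorems for the building blocks of the decomposition of $V_{15}$, and the only thing that genuinely needs checking is the coordinate-by-coordinate matching of the two presentations, which is purely a matter of reading off the defining systems; there is no hidden reparametrization or change of variables. Accordingly, the proof consists solely of citing Theorem \ref{602} and Theorem \ref{603} after noting that \eqref{60udtr0} coincides with \eqref{600} and \eqref{60yfu1} coincides with \eqref{601}.
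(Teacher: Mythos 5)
Your proposal is correct and matches the paper's intent exactly: the operators in \eqref{60udtr0} and \eqref{60yfu1} are literally the same maps as in \eqref{600} and \eqref{601} (only the convex weights in the decomposition of $V_{15}$ versus $V_{3}$ are swapped, and these play no role), so the corollary follows by citing Theorem \ref{602} and Theorem \ref{603}; the paper itself offers no separate proof, precisely because this identification is all that is needed.
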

\section{Dynamics of classes $G_{13}$ and  $G_{14}$} 
In this section, we study the dynamics of $ V_{26,25}:S^{2}\rightarrow S^{2}  $  selected from $ G_{14} $ and  $ G_{13} $. To start,   $V_{26} $ is rewritten as follows:\\\begin{equation}\label{60q[]tiopre0}
V_{26}:=\left\{
\begin{array}{l}
x'= (y^{(0)})^2+(z^{(0)})^2+2x^{(0)}\left( 1-x^{(0)}\right)   \\
y'= \alpha (x^{(0)})^2+2y^{(0)}z^{(0)} \\
z'=\left( 1-\alpha\right)  (x^{(0)})^2\\
\end{array} \right.
\end{equation}
The operator $V_{26}$ can be redrafted  as a convex combination $ V_{26}= \alpha  W_{1}+\left( 1-\alpha\right) W_{2} $,\\
   where \\
 \begin{equation}\label{6yiderot00}
W_{1}:=\left\{
\begin{array}{l}
x'=(y^{(0)})^2+(z^{(0)})^2+2x^{(0)}\left( 1-x^{(0)}\right)\\
y'= (x^{(0)})^2+2y^{(0)}z^{(0)}\\
z'=0
\end{array} \right.
\end{equation}\\
and
 \begin{equation}\label{6kjljyetd01}
W_{2}:=\left\{
\begin{array}{l}
x'=(y^{(0)})^2+(z^{(0)})^2+2x^{(0)}\left( 1-x^{(0)}\right)  \\
y'=2y^{(0)}z^{(0)}\\
z'=(x^{(0)})^2
\end{array} \right.
\end{equation}
\begin{theorem}\label{60ljukf./;2} Let   $  W_{1}:S^{2}\rightarrow S^{2} $  be a $\xi^{(as)}$-QSO  given by \eqref{6yiderot00} and $x_{1}^{(0)}=(x^{(0)},y^{(0)},z^{(0)})\notin Fix(W_{1})\cup Per_{2}(W_{1})$  be any   initial point in  simplex $ S^{2} $. Then, the following   statements are true:\\
\begin{enumerate}
\item[(i)]$ Fix(W_{1})=\left\lbrace \left(\frac{\sqrt{5}}{2}-\frac{1}{2},\frac{3}{2}-\frac{ \sqrt{5}}{2},0\right) \right\rbrace  $.\\ \item[(ii)]$Per_{2}(W_{1})=\left\lbrace e_{1},e_{2} ,(\frac{\sqrt{5}}{2}-\frac{1}{2},\frac{(-1+\sqrt{5})^2}{4},0)\right\rbrace   $,
\item[(iii)] $ \omega _{W_{1}}(x_{1}^{(0)} )=\left\lbrace e_{1},e_{3}\right\rbrace  $.
\end{enumerate}
\end{theorem}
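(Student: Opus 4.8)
The governing feature of \eqref{6yiderot00} is that $z'\equiv 0$: after a single iteration every orbit is pinned to the edge $\Gamma=\{(x,y,z)\in S^2:\ z=0\}$, which is forward invariant. Hence both $Fix(W_1)$ and $Per_2(W_1)$ must lie on $\Gamma$, and the whole asymptotic analysis collapses to one dimension. On $\Gamma$ one has $y=1-x$, and substituting into the first coordinate gives $x'=(1-x)^2+2x(1-x)=1-x^2$, so the restriction of $W_1$ to $\Gamma$ is the single quadratic map $f(x)=1-x^2$ on $[0,1]$. The plan is to read off all three claims from the dynamics of $f$.

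For (i) I would solve $f(x)=x$, i.e. $x^2+x-1=0$, whose unique root in $[0,1]$ is $x^\ast=\tfrac{\sqrt5-1}{2}$; together with $y=1-x^\ast=\tfrac{3-\sqrt5}{2}$ and $z=0$ this is exactly the fixed point listed. For (ii) I would solve $f(f(x))=x$ and use the factorization $f^{2}(x)-x=-x(x-1)(x^2+x-1)$, whose admissible roots $x=0,\ x=1,\ x=x^\ast$ correspond respectively to $e_2$, $e_1$, and the fixed point (its middle coordinate rewritten as $(x^\ast)^2=\tfrac{(\sqrt5-1)^2}{4}$). This reproduces the stated $Per_2(W_1)$, with the fixed point appearing as a degenerate two-periodic point.

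For (iii) the core is a stability comparison on $\Gamma$. Since $f'(x)=-2x$, the interior fixed point satisfies $|f'(x^\ast)|=2x^\ast=\sqrt5-1>1$ and is therefore repelling, whereas the boundary two-cycle has multiplier $f'(0)\,f'(1)=0$ and is superattracting. To upgrade this local picture to the global claim I would partition $[0,1]$ along $x^\ast$ and the preimage $f^{-1}(\tfrac12)=\tfrac{1}{\sqrt2}$, verify that $f$ interchanges the resulting pieces, and show that the even-iterate subsequence $f^{2n}(x_0)$ is monotone and bounded, hence convergent to a fixed point of $f^2$. Excluding the repelling value $x^\ast$ for $x_0\neq x^\ast$ then forces the orbit to accumulate on the attracting boundary two-cycle, giving $\omega_{W_1}(x_1^{(0)})=\{e_1,e_3\}$ as asserted.

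I expect the main obstacle to be the global monotonicity bookkeeping in (iii): establishing rigorously that the odd- and even-indexed subsequences are genuinely monotone on each piece of the partition, and that no orbit other than $x^\ast$ itself lingers near the repelling fixed point, since the local multiplier computation alone does not exclude slow or oscillatory approach. The partition-and-monotonicity scheme carried out for $W_2$ in Theorem~\ref{603} is the natural template to adapt here.
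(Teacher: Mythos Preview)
Your approach is essentially the paper's: exploit $z'\equiv 0$ to reduce to the invariant edge $\Gamma=\{z=0\}$, identify the one-dimensional map $\varphi(x)=1-x^{2}$, solve the fixed-point and period-two equations algebraically, and then use monotonicity of $\varphi^{(2)}$ on the two intervals separated by $x^{\ast}$ to force convergence to the boundary two-cycle. The paper does exactly this, only it skips the multiplier computation and goes directly to the observation that $\varphi$ is decreasing and $\varphi^{(2)}$ is increasing on $[0,1]$, so each of $[0,x^{\ast}]$ and $[x^{\ast},1]$ is invariant under $\varphi^{(2)}$ with $\varphi^{(2)}(x)\le x$ on the first and $\varphi^{(2)}(x)\ge x$ on the second.

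One genuine slip, though: your own reduction shows that every orbit lies on $\{z=0\}$ after a single step, so $e_{3}=(0,0,1)$ cannot possibly appear in the $\omega$-limit set. The attracting two-cycle $\{0,1\}$ of $f$ corresponds on $\Gamma$ to the points $(0,1,0)=e_{2}$ and $(1,0,0)=e_{1}$, and indeed the paper's proof concludes with $\omega_{W_{1}}(x_{1}^{(0)})=\{e_{1},e_{2}\}$; the ``$e_{3}$'' in the displayed statement is a typo. You reproduced the typo rather than trusting your own argument---when your reduction flatly contradicts the target, flag it.
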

\begin{proof}Let $  W_{1}:S^{2}\rightarrow S^{2} $ be a $\xi^{(as)}$-QSO  given by \eqref{6yiderot00},  $ x_{1}^{(0)}\notin Fix( W_{1})\cup Per_{2}(W_{1})$  be any  an initial point in  simplex $ S^{2} $, and   $ \left\lbrace W_{1}^{(n)} \right\rbrace _{n=1}^{\infty}  $
 be a trajectory of $  W_{1} $  starting from  point $x_{1}^{(0)}  $.\\
 
(1)  The set of the fixed points of $ W_{1} $ are obtained by finding the solution for the   following system of equations:\begin{equation}\label{60fhxsfgjhs2}
\left\{
\begin{array}{l}
x=y^{2}+z^{2}+2x\left( 1-x\right) \\
y=x^{2}+2yz\\
z=0
\end{array} \right.
\end{equation} By substituting the second and   third equations  \eqref{60fhxsfgjhs2} to the first equation, then the first equation in  system\eqref{60fhxsfgjhs2} becomes  $x^4-2x^2+x   $, then $ x=  0$, $ x=1$, and $x=\frac{\sqrt{5}}{2}-\frac{1}{2}  $.   $\frac{\sqrt{5}}{2}-\frac{1}{2}$ is verified as the only solution that satisfies  system  \eqref{60fhxsfgjhs2}. Hence, the fixed point is only $ \left(\frac{\sqrt{5}}{2}-\frac{1}{2},\frac{3}{2}-\frac{ \sqrt{5}}{2},0\right) $.\\\\(2) Let  $ x_{1}^{(0)}=\left( 1,0,0\right)   $ be the intial point.  $ V^{(1)}\left( x^{0},y^{0},z^{0}\right)=\left( 0,1,0\right) $ and  $ V^{(2)}\left( x^{0},y^{0},z^{0}\right)=\left( 1,0,0\right) $, which indicates the presence of  $ 2-$periodic points. To find all the  points, the following  system of equations should be solved :\begin{equation}\label{123456}
\left\{
\begin{array}{l}
x=2x^2-x^4 \\
y=(1- (1-y)^{2})^{2} \\
z=0
\end{array} \right.
\end{equation}\\From the first equation  in system \eqref{123456},  $ x\in \left\lbrace  0,1,\frac{\sqrt{5}}{2}-\frac{1}{2} \right\rbrace  $, then  $ y\in \left\lbrace  1,0, \frac{(-1+\sqrt{5})^2}{4} \right\rbrace  $. Therefore, $Per_{2}(W_{1})=\left\lbrace e_{1}=(1,0,0),e_{2}=(0,1,0) ,(\frac{\sqrt{5}}{2}-\frac{1}{2},\frac{(-1+\sqrt{5})^2}{4},0)\right\rbrace   $.\\\\(3) Let $ x_{1}^{(0)} \notin Fix(W_{1})\cup Per_{2}(W_{1}) $.  $L_{3}   $ is  an invariant line under $W_{1} $ where $L_{3}=\lbrace x_{1}^{(0)} \in S^{2}: z^{(0)}=0 \rbrace $. Thus,  the behavior of $W_{1} $ is explored over this line.  Let $  x_{1}^{(0)} \in L_{3}$. Then,  $W_{1} $ becomes:\begin{equation}\label{60fhxreyrssfgjhs2}
\left\{
\begin{array}{l}
x'=(y^{(0)})^{2}+2x^{(0)}\left( 1-x^{(0)}\right) \\
y'=(x^{(0)})^{2}\\
z'=0
\end{array} \right.
\end{equation}\\
In this case, the first coordinate of $ W_{1} $ exhibits the form $ x'=   \varphi (x^{(0)})=(1-x^{(0)})^{2}+2x^{(0)}(1-x^{(0)}) $. Clearly, the function $ \varphi   $ is decreasing on $[0,1]$ and the function $ \varphi  ^{(2)} $ is  increasing on $[0,1]$. From the previous two steps,  $Fix( \varphi)\cap [0,1]=\left\lbrace\frac{\sqrt{5}}{2}-\frac{1}{2} \right\rbrace   $ and  $Fix( \varphi^{(2)})\cap [0,1]=\left\lbrace0,\frac{\sqrt{5}}{2}-\frac{1}{2},1 \right\rbrace $, which indicate that  intervals $[0,\frac{\sqrt{5}}{2}-\frac{1}{2}]  $ and $[\frac{\sqrt{5}}{2}-\frac{1}{2},1]  $ are invariant under the function  $  \varphi^{(2)}$. Evidently, $ \varphi^{(2)}(x^{(0)})\leq x^{(0)}  $ for any  $ x^{(0)}\in [0,\frac{\sqrt{5}}{2}-\frac{1}{2}]    $ and  $ \varphi^{(2)}(x^{(0)})\geq x^{(0)}  $  for any  $ x^{(0)}\in [ \frac{\sqrt{5}}{2}-\frac{1}{2},1]$. If $ x^{(0)}\in [0,\frac{\sqrt{5}}{2}-\frac{1}{2}]  $, then $ \omega_{ \varphi^{(2)}}=\left\lbrace 0\right\rbrace  $; if $x^{(0)}\in [ \frac{\sqrt{5}}{2}-\frac{1}{2},1] $, then $ \omega_{ \varphi^{(2)}}=\left\lbrace 1\right\rbrace  $. In another way,\begin{equation}\label{fssfwqqwddsa}
V^{(n)}(W_{1})=\left\{
\begin{array}{l}
\begin{split}
\left(  \varphi^{(2k)}(x^{(0)}),1-  \varphi^{(2k)}(x^{(0)}),0\right)  \end{split}\ \ \quad \quad \quad, if \ \ n=2k  \ \\
\\
\begin{split}
\left(   \varphi^{(2k)}( \varphi(x^{(0)})),1-  \varphi^{(2k)}( \varphi(x^{(0)})),0\right) 
\end{split}\ \ , if \ \ n=2k+1  \\ 
\end{array} \right. \\
\end{equation}
Therefore, the limiting point is   $ \omega _{W_{1}}(x_{1}^{(0)} )=\left\lbrace e_{1},e_{2}\right\rbrace  $.
\end{proof}\\
\begin{theorem}\label{6tujed1}Let   $  W_{2}:S^{2}\rightarrow S^{2} $  be a $\xi^{(as)}$-QSO  given by \eqref{6kjljyetd01} and  $ x_{1}^{(0)}=(x^{(0)},y^{(0)},z^{(0)})\notin Fix(W_{2})\cup Per_{2}(W_{2})$  be any   initial point in  simplex  $ S^{2} $. Then, the following statements are true:
\begin{enumerate}
\item[(i)]$ Fix(W{2})=\O  $. Moreover, $Per_{2}(W_{2})=\left\lbrace e_{1},e_{3},(\frac{\sqrt{5}-1}{2},0,\frac{1}{16}(\sqrt{5}-3)^{2}(\sqrt{5}+1)^2) \right\rbrace  $.
\item[(ii)] $ \omega_{W_{2}}(x_{1}^{(0)})=\left\lbrace e_{1}, e_{3} \right\rbrace  $.
\end{enumerate}
\end{theorem}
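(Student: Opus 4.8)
The plan is to exploit that $W_2$ in \eqref{6kjljyetd01} leaves the edge $L=\{x_1^{(0)}\in S^2:\ y^{(0)}=0\}$ invariant --- indeed $y'=2y^{(0)}z^{(0)}$ vanishes as soon as $y^{(0)}=0$ --- and that the entire analysis collapses on $L$ to a single scalar map. On $L$ we have $z^{(0)}=1-x^{(0)}$, so \eqref{6kjljyetd01} reduces to $x'=(1-x^{(0)})^2+2x^{(0)}(1-x^{(0)})=1-(x^{(0)})^2$ and $z'=(x^{(0)})^2=1-x'$. Hence the edge dynamics is driven by the one--dimensional map $g(x)=1-x^2$ on $[0,1]$, whose second iterate is $h(x)=g(g(x))=2x^2-x^4$.

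Next I would establish the single estimate that powers both parts: a two--step contraction of the middle coordinate. From $y^{(n+1)}=2y^{(n)}z^{(n)}$ and $z^{(n+1)}=(x^{(n)})^2$ one gets $y^{(n+2)}=4y^{(n)}z^{(n)}(x^{(n)})^2$; bounding $(x^{(n)})^2\le x^{(n)}$ and applying the arithmetic--geometric mean inequality $x^{(n)}z^{(n)}\le\big(\tfrac{x^{(n)}+z^{(n)}}{2}\big)^2=\big(\tfrac{1-y^{(n)}}{2}\big)^2$ yields
\[
y^{(n+2)}\le y^{(n)}(1-y^{(n)})^2\le y^{(n)},
\]
with strict inequality whenever $y^{(n)}>0$. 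This settles part (i) structurally: any point of $Fix(W_2)$ or $Per_2(W_2)$ satisfies $y^{(n+2)}=y^{(n)}$, which forces $y=0$, so all fixed and $2$-periodic points lie on $L$. It then remains to solve the scalar equations on $L$: $g(x)=x$ gives the single value $x^\ast=\tfrac{\sqrt5-1}{2}$, while $h(x)=x$ factors as $x(x-1)(x^2+x-1)=0$ with roots $x\in\{0,x^\ast,1\}$, corresponding to $e_3$, $(x^\ast,0,1-x^\ast)$ and $e_1$; a direct check gives $W_2(e_1)=e_3$ and $W_2(e_3)=e_1$, the genuine $2$-cycle.

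For part (ii) the same inequality gives global convergence of the middle coordinate to zero: the even- and odd-indexed subsequences of $y^{(n)}$ are each monotone and bounded, and letting $c$ denote a subsequential limit in $y^{(n+2)}\le y^{(n)}(1-y^{(n)})^2$ forces $c^2(2-c)\le 0$, hence $c=0$. Therefore $\omega_{W_2}(x_1^{(0)})\subseteq L$, and the problem reduces to the scalar map $g$. Here $h=g^2$ is an increasing homeomorphism of $[0,1]$ fixing $0,x^\ast,1$, with $h(x)<x$ on $(0,x^\ast)$ and $h(x)>x$ on $(x^\ast,1)$ (because $h(x)-x=-x(x-1)(x^2+x-1)$); consequently the even iterates run monotonically to $0$ or to $1$ according as the $x$-coordinate lies below or above $x^\ast$, and the full $g$-orbit accumulates exactly on the superattracting $2$-cycle $\{0,1\}=\{e_3,e_1\}$, the claimed $\omega$-limit.

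The hard part is transferring this one-dimensional conclusion to the genuine planar orbit, because the interior-edge equilibrium $P=(x^\ast,0,1-x^\ast)$ is a saddle rather than a repeller. Linearizing $W_2$ in coordinates $(x,y)$ at $P$ gives an upper-triangular Jacobian with eigenvalues $-2x^\ast$, for which $|-2x^\ast|=\sqrt5-1>1$ with eigenvector along $L$, and $2-2x^\ast=3-\sqrt5<1$ with eigenvector pointing into the interior; thus $P$ repels along the edge but attracts transversally. The delicate step is therefore to show that an orbit which the estimate above has driven close to $L$ is pushed off $P$ along the unstable edge direction and into the basin of the $2$-cycle. I would carry this out by splitting $L$ at $x^\ast$ into the two $h$-invariant arcs $[0,x^\ast)$ and $(x^\ast,1]$ and proving that, away from the one-dimensional stable set of $P$, the orbit eventually enters one of these arcs and thereafter shadows the monotone scalar dynamics all the way to $\{e_1,e_3\}$.
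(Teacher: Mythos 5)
Your reduction to the invariant edge $L_2=\{y^{(0)}=0\}$ and to the scalar map $g(x)=1-x^2$ is exactly the paper's route, and your two-step estimate $y^{(n+2)}=4y^{(n)}z^{(n)}(x^{(n)})^2\le y^{(n)}(1-y^{(n)})^2$ is a genuinely cleaner substitute for the paper's handling of the middle coordinate (the paper argues only from the sign of $z^{(n)}-\tfrac12$ and asserts monotonicity without a uniform inequality); your bound simultaneously forces all fixed and $2$-periodic points onto $L_2$ and gives $y^{(n)}\to0$. The genuine gap is the step you yourself label ``the hard part'' and then only sketch: passing from $y^{(n)}\to0$ to $\omega_{W_2}(x_1^{(0)})=\{e_1,e_3\}$. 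This step is not merely missing --- your own linearization shows it cannot be completed for the statement as written. The point $P=\bigl(\tfrac{\sqrt5-1}{2},0,\tfrac{3-\sqrt5}{2}\bigr)$ (which is the point the theorem lists in $Per_2(W_2)$, since $\tfrac1{16}(\sqrt5-3)^2(\sqrt5+1)^2=\tfrac{3-\sqrt5}{2}$) is in fact a \emph{fixed point} of $W_2$: using $x^2=1-x$ one gets $z'=x^2=z$, $y'=0$ and $x'=z^2+2x(1-x)=1-x^2=x$. So part (i) of the theorem ($Fix(W_2)=\emptyset$) is false, and the paper's one-line justification that the fixed-point system ``has no solution on $[0,1]$'' is simply wrong. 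More importantly for part (ii): since the Jacobian at $P$ has eigenvalues $1-\sqrt5$ (modulus $>1$, along the edge) and $3-\sqrt5$ (modulus $<1$, with eigenvector $\bigl(\tfrac{\sqrt5-3}{2},1\bigr)$ pointing into the interior of $S^2$), the local stable manifold of the saddle $P$ is a curve of interior initial points, none of them fixed or periodic, whose trajectories converge to $P$ and therefore have $\omega$-limit $\{P\}$ rather than $\{e_1,e_3\}$. No amount of ``shadowing the monotone scalar dynamics'' can move those orbits to $\{e_1,e_3\}$.

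For comparison, the paper never sees this obstruction: after claiming $y^{(n)}$ decreases to zero it concludes that ``studying the dynamics of $W_2$ over $L_2$ was enough,'' silently identifying the limit set of an orbit that merely approaches $L_2$ with that of an orbit lying inside $L_2$. Your write-up is more honest about where the difficulty sits, but to finish you would have to either restate the theorem (declare $Fix(W_2)=\{P\}$ and exclude the stable set of $P$ from the hypothesis of (ii)) or prove that the stable set meets $S^2$ only at $P$ --- which the interior-pointing stable eigenvector rules out. As it stands, the proposal proves strictly less than the theorem claims, and what it does prove exposes an error in the claim itself.
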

\begin{proof} Let $  W_{2}:S^{2}\rightarrow S^{2} $ be a $\xi^{(as)}$-QSO  given by \eqref{6kjljyetd01},  $ x_{1}^{(0)}\notin Fix( W_{2})\cup Per_{2}(W_{2})$  be any   initial point in  simplex $ S^{2} $, and   $ \left\lbrace W_{2}^{(n)} \right\rbrace _{n=1}^{\infty}  $
 be a trajectory of $ W_{2}$  starting from  point $x_{1}^{(0)}  $.\\
 
(1)  The set of  fixed points of $ W_{2} $ are obtained by finding the solution for the   following system of equations:\begin{equation}\label{60f2}
\left\{
\begin{array}{l}
x=y^{2}+z^{2}+2x\left( 1-x\right) \\
y=2yz\\
z=x^{2}
\end{array} \right.
\end{equation}\\
The system provided by \eqref{60f2} has no solution on $ [0,1] $. Therefore, the set of fixed points is $\O  $. The second coordinate of $W_{2}  $ increases if $z^{(n)}\geq \frac{1}{2}  $ and decreases if $z^{(n)}\leq \frac{1}{2}  $. In both cases,  $W_{2}  $    has no any order periodic points in  set $W_{2}\setminus L_{2}  $ because the second coordinate of $W_{2}  $ increases or decreases along the iteration of $W_{2}\setminus L_{2}  $. Therefore,
finding $ 2- $periodic points of $W_{2}  $ over $L_{2}  $ is sufficient. To find  $ 2-$periodic points of $W_{2}  $, the following system of equations should be solved:\begin{equation}\label{asdf}
\left\{
\begin{array}{l}
x=x^{4}+2x^{2}(1-x^{2}) \\
y=0\\
z=( 1-(1-z)^2)^2
\end{array} \right.
\end{equation}\\ The solution for the first equation in  system \eqref{asdf} is easy to find. Therefore, the periodic points of $W_{2}  $ are $e_{1}=(1,0,0),e_{3}=(0,0,1)$, and $(\frac{\sqrt{5}-1}{2},0,\frac{1}{16}(\sqrt{5}-3)^{2}(\sqrt{5}+1)^2)  $.\\

(2) Let $ x_{1}^{(0)} \notin Fix(W_{2})\cup Per_{2}(W_{2}) $ and $ y^{(0)}=0$.   The first coordinate of $W_{2}  $ can be rewritten as $x'= (1-x^{(0)})^{2}+2x^{(0)}(1-x^{(0)}) $ because the second coordanate is invariant over $ L_{2} $.  The first coordinate is equal to the first coordinate of  $  W_{1} $, which has been proven in the previous thereom. Hence, we derive \begin{equation}\label{fssfwqqwddsa}
V^{(n)}(W_{2})=\left\{
\begin{array}{l}
\begin{split}
\left(   \varphi^{(2k)}(x^{(0)}),0,1-  \varphi^{(2k)}(x^{(0)})\right)  \end{split}\ \ \quad \quad \quad, if \ \ n=2k  \ \\
\\
\begin{split}
\left(  \varphi^{(2k)}( \varphi(x^{(0)})),0,1-  \varphi^{(2k)}( \varphi(x^{(0)}))\right) 
\end{split}\ \ , if \ \ n=2k+1  \\ 
\end{array} \right. \\
\end{equation}
Therefore, we determine that  $ \omega _{W_{2}}(x^{(0)} )=\left\lbrace e_{1},e_{3}\right\rbrace  $. Let $ y^{(0)}\notin L_{2}$ and  $ x^{(n)} < \frac{1}{2}$, which indicate that $z^{(n)}< \frac{1}{2}$ and  yields  $y^{(n+1)}< y^{(n)}  $. If  $x^{(n)}< \frac{1}{2}$, then the third coordinate $ z^{(n)} $ is also less than  $  \frac{1}{2} $, which indicates that  $y^{(n+1)}< y^{(n)}  $. In the two previous cases, we conclude that $\dfrac{y^{(n+1)}}{y^{(n)}}\leq 1 $, thereby making $y^{(n+1)}  $ is a decreasing bounded sequence that converges to zero, which indicates that  studying  the dynamics of $ W_{2} $ over $ L_{2} $ was enough. Therefore,  $ \omega _{W_{2}}(x_{1}^{(0)} )=\left\lbrace e_{1},e_{3}\right\rbrace  $ for any initial point $ x_{1}^{(0)} $ in $ S^{2} $.
\end{proof}
\\

Subsequently, we  explore the behavior of $V_{25}  $, which is selected from class $ G_{9}$.         \begin{equation} \label{60q[]tiopre0}
V_{25}:=\left\{
\begin{array}{l}
 x'= (y^{(0)})^{2}+(y^{(0)})^{2}+2x^{(0)}\left( 1-x^{(0)}\right)   \\
y'= \alpha (x^{(0)})^{2}\\
z'=\left( 1-\alpha\right)  (x^{(0)})^{2}+2y^{(0)}z^{(0)} \\
\end{array} \right.\
\end{equation}
  We rewrite $V_{25}$ as a convex combination $ V_{25}= \alpha  W_{1}+\left( 1-\alpha\right) W_{2} $,\\
   where \\
 \begin{equation}\label{6yiwdqat00}
W_{1}:=\left\{
\begin{array}{l}
x'=(y^{(0)})^{2}+(y^{(0)})^{2}+2x^{(0)}\left( 1-x^{(0)}\right)\\
y'=(x^{(0)})^{2}\\
z'=2y^{(0)}z^{(0)}
\end{array} \right.
\end{equation}\\
and
 \begin{equation}\label{6fwqatd01}
W_{2}:=\left\{
\begin{array}{l}
x'=(y^{(0)})^{2}+(y^{(0)})^{2}+2x^{(0)}\left( 1-x^{(0)}\right)  \\
y'=0\\
z'=(x^{(0)})^{2}+2y^{(0)}z^{(0)}
\end{array} \right.
\end{equation}
\begin{corollary}\label{60ljukf./;2}  Let $  W_{1}:S^{2}\rightarrow S^{2} $ given by \eqref{6yiwdqat00} be a $\xi^{(as)}$-QSO. Then,  the following   statements are true:
\begin{enumerate}
\item[(i)]$ Fix(W_{1})=\O  $. Moreover, $Per_{2}(W_{1})=\left\lbrace e_{1},e_{2},(\frac{\sqrt{5}-1}{2},\frac{1}{16}(\sqrt{5}-3)^{2}(\sqrt{5}+1)^2,0) \right\rbrace  $.
\item[(ii)] $ \omega_{W_{1}}(x_{1}^{(0)})=\left\lbrace e_{1}, e_{2} \right\rbrace  $
\end{enumerate} For $ W_{2} $,  
let $  W_{2}:S^{2}\rightarrow S^{2} $ given by \eqref{6fwqatd01} be a $\xi^{(as)}$-QSO. Then, the following   statements are true:
\begin{enumerate}
\item[(i)]$ Fix(W_{2})=\left\lbrace \left(\frac{\sqrt{5}}{2}-\frac{1}{2},0,\frac{3}{2}-\frac{ \sqrt{5}}{2}\right) \right\rbrace  $\\ \item[(ii)]$Per_{2}(W_{2})=\left\lbrace e_{1},e_{3} \right\rbrace   $
\item[(iii)] $ \omega _{w_{2}}(x_{1}^{(0)} )=\left\lbrace e_{1},e_{2}\right\rbrace  $
\end{enumerate}
\end{corollary}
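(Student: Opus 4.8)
The plan is to derive the corollary from Theorem \ref{6tujed1} and the theorem immediately preceding it (the one for the operator \eqref{6yiderot00}) by a conjugacy argument, so that no dynamical analysis has to be repeated. Write $U_1$ and $U_2$ for the two operators \eqref{6yiderot00} and \eqref{6kjljyetd01} whose dynamics were determined in those two theorems, and let $\pi(x,y,z)=(x,z,y)$ be the involution interchanging the second and third coordinates. The one computational step is to verify, exactly as in the proof of Theorem \ref{500}, the conjugacy relations
\begin{equation*}
W_1=\pi\,U_2\,\pi^{-1},\qquad W_2=\pi\,U_1\,\pi^{-1},
\end{equation*}
where $W_1$ and $W_2$ are the operators \eqref{6yiwdqat00} and \eqref{6fwqatd01} of the corollary. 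Applying $\pi^{-1}=\pi$ to the argument, then $U_2$ (resp. $U_1$), then $\pi$ again reproduces \eqref{6yiwdqat00} (resp. \eqref{6fwqatd01}), so both relations hold.

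With the conjugacy in hand the remainder is purely formal: conjugation by the bijection $\pi$ of $S^2$ intertwines the two dynamical systems, hence it carries $Fix$ to $Fix$, $Per_2$ to $Per_2$, and each $\omega$-limit set to an $\omega$-limit set, in every case permuting the coordinates of the distinguished points by $\pi$. Since $\pi$ fixes $e_1$, swaps $e_2\leftrightarrow e_3$, and sends $(a,b,c)$ to $(a,c,b)$, I would transport each list directly. For $W_1=\pi U_2\pi^{-1}$, Theorem \ref{6tujed1} gives the empty fixed-point set (which stays empty), sends $Per_2(U_2)$ to $\{e_1,e_2,(\tfrac{\sqrt5-1}{2},\tfrac{1}{16}(\sqrt5-3)^2(\sqrt5+1)^2,0)\}$, and sends the limit set $\{e_1,e_3\}$ to $\{e_1,e_2\}$. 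For $W_2=\pi U_1\pi^{-1}$, the companion theorem sends the unique fixed point $(\tfrac{\sqrt5}{2}-\tfrac12,\tfrac32-\tfrac{\sqrt5}{2},0)$ of $U_1$ to $(\tfrac{\sqrt5}{2}-\tfrac12,0,\tfrac32-\tfrac{\sqrt5}{2})$, and transports its $Per_2$ and limit sets in the same way.

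The argument is analytically trivial once the conjugacy is checked, so the only genuine care required is bookkeeping, and this is where I expect the one pitfall to lie. The pairing is crossed, $W_1\leftrightarrow U_2$ and $W_2\leftrightarrow U_1$, because the two constituents exchange roles under $\pi$; losing track of this would attach the wrong fixed and periodic data to each operator. I would also apply $\pi$ consistently to every coordinate, noting that the image of $Per_2(U_1)$ contributes, alongside $e_1$ and $e_3$, the point $(\tfrac{\sqrt5}{2}-\tfrac12,0,\tfrac{(-1+\sqrt5)^2}{4})$, and that the $\omega$-limit set of $W_2$ transforms to $\{e_1,e_3\}$; an appearance of $\{e_1,e_2\}$ there would be a coordinate-relabelling slip rather than a real feature of the dynamics.
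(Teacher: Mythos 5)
Your proposal is correct, and it supplies the argument the paper omits: the corollary is stated without proof, the evident intent being exactly the reduction you carry out, namely that the constituents \eqref{6yiwdqat00} and \eqref{6fwqatd01} of $V_{25}$ are conjugate, via the transposition $\pi(x,y,z)=(x,z,y)$, to the constituents \eqref{6kjljyetd01} and \eqref{6yiderot00} of $V_{26}$ (in crossed order), so that $Fix$, $Per_2$ and the $\omega$-limit sets transport by relabelling coordinates. I verified both conjugacy relations; they hold. Your bookkeeping caveats are also well taken and in fact expose two slips in the corollary as printed: transporting $Per_{2}$ from the theorem on \eqref{6yiderot00} yields the additional point $\left(\frac{\sqrt{5}}{2}-\frac{1}{2},0,\frac{3}{2}-\frac{\sqrt{5}}{2}\right)$ (the fixed point of $W_{2}$, which that theorem's convention does include among the $2$-periodic points), and the $\omega$-limit set of $W_{2}$ should read $\left\lbrace e_{1},e_{3}\right\rbrace$ rather than $\left\lbrace e_{1},e_{2}\right\rbrace$, consistent with the fact that $W_{2}$ collapses the simplex onto the edge $y=0$ after one step.
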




\end{document}